\documentclass{amsart}
\usepackage{amssymb}
\usepackage{amsmath}
\usepackage{hyperref}

\newtheorem{theorem}{Theorem}[section]
\newtheorem{lemma}[theorem]{Lemma}

\newtheorem{corollary}[theorem]{Corollary}

\newtheorem{conjecture}[theorem]{Conjecture}

\theoremstyle{definition}

\DeclareMathOperator{\ord}{ord}
\DeclareMathOperator{\supp}{supp}

\begin{document}
\title{Direct zero-sum problems for certain groups of rank three}
\author{Benjamin Girard} 
\address{Sorbonne Universit\'e, Universit\'e Paris Diderot, CNRS, Institut de Math\'ematiques de Jussieu - Paris Rive Gauche, IMJ-PRG, F-75005, Paris, France}
\email{\texttt{benjamin.girard@imj-prg.fr}}
\author{Wolfgang A.~Schmid}
\address{Universit\'e Paris 13, Sorbonne Paris Cit\'e, LAGA, CNRS, UMR 7539, Universit\'e Paris 8, 
F-93430, Villetaneuse, France, and 
Laboratoire Analyse, G\'eom\'etrie et Applications (LAGA, UMR 7539), COMUE Universit\'e Paris Lumi\`eres, Universit\'e Paris 8, 
CNRS, 93526 Saint-Denis cedex, France}
\email{\texttt{schmid@math.univ-paris13.fr}}

\keywords{finite abelian group, zero-sum sequence, Davenport constant,  Erd\H{o}s--Ginzburg--Ziv constant, inductive method}
\subjclass[2010]{11B30, 05E15, 20K01}

\begin{abstract}
We determine the exact value of the $\eta$-constant and the multiwise Davenport constants  for finite abelian groups of rank three having the form $G \simeq C_2 \oplus C_{n_2} \oplus C_{n_3}$ with $2 \mid n_2 \mid n_3$. Moreover,  we determine the Erd\H{o}s--Ginzburg--Ziv constant of these groups under the assumption that  $n_2/2$  has Property D or $n_2  = n_3$.   
\end{abstract}

\maketitle 

\section{Introduction}

A well-known direct zero-sum problem is to determine the Davenport constant of finite abelian groups. 
For such a group $(G,+,0)$, this constant, denoted by $\mathsf{D}(G)$, is defined as the smallest non-negative integer $t$ such that every sequence of $t$ elements from $G$ contains a non-empty subsequence whose terms sum to $0$. 

A closely related problem is to determine the Erd\H{o}s--Ginzburg--Ziv constant, denoted by $\mathsf{s}(G)$, which is defined in the same way except that one requires the existence of a subsequence whose sum is $0$ and whose length is equal to the exponent of the group.
A variant of this constant is $\eta(G)$, where one seeks a non-empty subsequence with sum $0$ whose length is at most the exponent of the group.   

The investigation of these zero-sum constants has been a topic of active research for more than fifty years.
We refer to \cite{GaoGero06,GeroRuzsa09,GeroKoch06,GrynkiewiczBOOK}  for detailed expositions.  
Some results are also recalled in the next section. 
The exact values of these three constants are known for every finite abelian group of rank at most two, and only for fairly special types of groups of higher rank.
Even for groups of rank three, 
that is, $G \simeq C_{n_1} \oplus C_{n_2} \oplus C_{n_3}$ with $1< n_1 \mid n_2 \mid n_3$,
the problem of determining these constants is wide open. 
For example, when $n_1=2,$ the Davenport constant is known, but the exact values of the other two constants defined above is not.

In the present paper, we obtain these values for the $\eta$-constant and, assuming a now well-supported conjecture, for the Erd\H{o}s--Ginzburg--Ziv constant as well. 
Our results confirm Gao's conjecture (Conjecture \ref{GaoConjecture}) for this type of groups, and generalize previous results obtained in the case $n_1 =n_2 = 2$  (see \cite[Theorem 1.2(1)]{FanZhong16} and \cite[Theorem 1.3]{FanGaoPengWangZhong13}). Moreover, they show that recent results of Luo \cite{Luo17} are essentially optimal.  
In addition, we determine the multiwise Davenport constants for this type of groups (see the subsequent section for the definition).  
For a more detailed overview of our results and how they relate to the existing literature on the subject, we refer to Section \ref{sec_nr}. 

\section{Preliminaries}
\label{sec_prel}

We recall some notation and results; for more detailed information we refer again to \cite{GaoGero06,GeroRuzsa09,GeroKoch06,GrynkiewiczBOOK}. All intervals in this paper are intervals of integers, specifically $[a,b] = \{z \in \mathbb{Z} \colon a \le z \le b \}$.

Let $G$ be a finite abelian group, written additively. For each $g$ in $G$, we denote by $\ord(g)$ its order in $G$. 
For a subset $A \subseteq G$ we denote by  $\left\langle A \right\rangle$ the subgroup it generates; we say that $A$ is a \emph{generating set} 
if  $\left\langle A \right\rangle = G$. We say that elements $g_1 , \dots, g_k$ are \emph{independent} if 
$\sum_{i=1}^k a_i g_i= 0$, with integers $a_i$, implies that $a_ig_i = 0$ for each $i$; we say that a set is independent when its elements
are independent.  

By $\exp(G)$ we denote the exponent of $G$, that is the least common multiple of the orders of elements of $G$.  By 
$\mathsf{r}(G)$ we denote the rank of $G$, that is the minimum cardinality of a generating subset of $G$. 
For $n$ a positive integer we denote by $C_n$ a cyclic group of order $n$. 

For a finite abelian group $G$ there exist uniquely determined integers $1 < n_1 \mid \dots \mid n_r$ such that $G \simeq C_{n_1} \oplus \dots \oplus C_{n_r}$. For $|G|>1$ we have $\mathsf{r}(G)= r$ and $\exp(G)=n_r$; the rank of a group of cardinality $1$ is $0$ and its exponent is $1$.
  
\medskip
By a \emph{sequence} over $G$ we mean an element of the free abelian monoid over $G$. In other words, this is a finite sequence of $\ell$ elements from $G$, where repetitions are allowed and the order of elements is disregarded. We use multiplicative notation for sequences. We denote its neutral element, that is the sequence  of length zero, simply by $1$. Let
\[
S = g_1  \cdots  g_{\ell} = \displaystyle\prod_{g \in G} g^{\mathsf{v}_g(S)}
\]
be a sequence over $G$, where, for all $g \in G$, $\mathsf{v}_g(S)$ is a non-negative integer called the \emph{multiplicity} of $g$ in $S$. Moreover $\ell$ is the  \emph{length} of $S$. 

A sequence $T$ over $G$ is said to be a \emph{subsequence} of $S$ if it is a divisor of $S$ in the free abelian monoid over $G$, that is if $\mathsf{v}_g(T) \le \mathsf{v}_g(S)$ for all $g \in G$; in this case we write $T \mid S$. For a subsequence $T$ of $S$ we set
\(ST^{-1} = \prod_{g \in G} g^{\left(\mathsf{v}_g(S)-\mathsf{v}_g(T)\right)}\), that is, it is the subsequence of $S$ such that 
$T (ST^{-1}) = S$. 

For subsequences $T_1, \dots, T_k$  of $S$ we say that they are \emph{disjoint subsequences} if 
$T_1 \cdots T_k$ is also a subsequence of $S$. Moreover, for sequences 
  $S_1, \dots, S_k$ over $G$ we denote by 
\[
\gcd(S_1, \dots, S_k)=\displaystyle\prod_{g \in G} g^{\min\left\{\mathsf{v}_g(S_i)\colon 1 \le i \le   k \right\} }
\]
the greatest common divisor of the sequences $S_1, \dots, S_k$ in the free abelian monoid. To avoid confusion we stress that it is not necessary for disjoint subsequences to have a trivial greatest common divisor.

\medskip
We call the set $\supp(S)=\{g \in G \mid \mathsf{v}_g(S)>0\}$ the \emph{support} of $S$, and $\sigma(S)=\sum^{\ell}_{i=1}g_i=\sum_{g \in G}\mathsf{v}_g(S)g$ the \emph{sum} of $S$. In addition, we say that $s \in G$ is a \emph{subsum} of $S$ if
\[
s=\displaystyle\sum_{i \in I} g_i \text{ for some }\emptyset \varsubsetneq I \subseteq [1,\ell].
\]

\noindent If $0$ is not a subsum of $S$, we say that $S$ is a \emph{zero-sumfree sequence}. If $\sigma(S)=0,$ then $S$ is said to be a \emph{zero-sum sequence}. 
If, moreover, one has $\sigma(T) \neq 0$ for all proper and non-empty subsequences $T \mid S$, then $S$ is called a \emph{minimal zero-sum sequence}.

We set
\[
\Sigma(S) = \{\sigma(T) \colon 1 \neq T \mid S\}.
\]
\noindent For every integer $k$, we also set  
\[
\Sigma_k(S)=\{\sigma(T) \colon T \mid S,\, |T| = k\}\] 
as well as 
\[ \Sigma_{\le k}(S)=\bigcup^k_{i=1} \Sigma_i(S) = \{\sigma(T) \colon 1 \neq T \mid S,\, |T| \le k\}.
\]

\medskip 
We now recall in more detail the definitions and results alluded to in the introduction. 

By $\mathsf{D}(G)$ we denote the smallest non-negative integer $t $ such that every sequence $S$ over $G$ of length $|S| \ge t$ contains a non-empty zero-sum subsequence. This number $\mathsf{D}(G)$ is called the \emph{Davenport constant} of the group $G$.
More generally, given an integer $k \ge 1$, we denote by $\mathsf{D}_k(G)$ the smallest non-negative integer $t $ such that every sequence $S$ over $G$ of length $|S| \ge t$ contains at least $k$ non-empty disjoint zero-sum subsequences. 

\medskip
Note that, by definition, $\mathsf{D}_1(G) = \mathsf{D}(G)$ for every finite abelian group $G$.
It is known that for every finite abelian group the sequence  $(\mathsf{D}_k(G))_{k \ge 1}$
 is eventually an arithmetic progression. 
 More precisely, one has the following result (see \cite[Lemma 5.1]{FreezeSchmid10}).  

\begin{theorem} 
Let $G$ be a finite abelian group. There exist $\mathsf{D}_0(G) \in \mathbb{N}$ and an integer $k_{0}\ge 1$ such that 
\[
\mathsf{D}_k(G) = \mathsf{D}_0(G) + k \exp(G), \text{ for each } k \ge k_0.
\]
\end{theorem}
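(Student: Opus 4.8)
The plan is to set $m=\exp(G)$ and to analyze the integer sequence $f(k):=\mathsf{D}_k(G)-km$ for $k\ge 1$. First note that every $\mathsf{D}_k(G)$ is finite: repeatedly peeling off a \emph{minimal} zero-sum subsequence (each of length at most $\mathsf{D}(G)$, and $\mathsf{D}(G)\le |G|$) from a sequence shows $\mathsf{D}_k(G)\le \mathsf{D}_{k-1}(G)+\mathsf{D}(G)$, hence $\mathsf{D}_k(G)\le k\mathsf{D}(G)$. So $f$ is a well-defined sequence of integers, and the statement is exactly that $f$ is eventually constant; it then suffices to show that $f$ is bounded below and eventually non-increasing, since an integer sequence with those two properties stabilizes, and its eventual value, call it $\mathsf{D}_0(G)$, satisfies $\mathsf{D}_k(G)=f(k)+km=\mathsf{D}_0(G)+k\exp(G)$ for all large $k$.

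For the lower bound, fix $g\in G$ with $\ord(g)=m$ (such $g$ exists since $\exp(G)=n_r$) and consider the sequence $g^{km-1}$. Every non-empty zero-sum subsequence of it has the form $g^{jm}$ with $j\ge 1$, hence length at least $m$, so $k$ pairwise disjoint such subsequences would have total length at least $km>km-1$, which is impossible. Therefore $\mathsf{D}_k(G)\ge km$, i.e.\ $f(k)\ge 0$ for every $k$; in particular $\mathsf{D}_k(G)\to\infty$. For the eventual upper bound I would invoke the finiteness of $\eta(G)$: every sequence over $G$ of length at least $\eta(G)$ has a non-empty zero-sum subsequence of length at most $m$. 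Given $S$ with $|S|\ge\max\{\mathsf{D}_k(G)+m,\,\eta(G)\}$, peel off such a subsequence $T$; then $|ST^{-1}|\ge |S|-m\ge \mathsf{D}_k(G)$, so $ST^{-1}$ contains $k$ pairwise disjoint non-empty zero-sum subsequences, and together with $T$ this gives $k+1$ disjoint such subsequences of $S$. Hence $\mathsf{D}_{k+1}(G)\le\max\{\mathsf{D}_k(G)+m,\,\eta(G)\}$, and since $\mathsf{D}_k(G)\to\infty$ there is a $k_1$ with $\mathsf{D}_k(G)+m\ge\eta(G)$ for all $k\ge k_1$, giving $\mathsf{D}_{k+1}(G)\le\mathsf{D}_k(G)+m$, that is $f(k+1)\le f(k)$, for every $k\ge k_1$.

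Putting the pieces together: for $k\ge k_1$ the sequence $f$ is non-increasing and bounded below by $0$, so it is constant from some index $k_0\ge k_1$ on; setting $\mathsf{D}_0(G):=f(k_0)\in\mathbb{N}$ yields $\mathsf{D}_k(G)=\mathsf{D}_0(G)+k\exp(G)$ for all $k\ge k_0$. The one genuinely non-elementary input is the finiteness of $\eta(G)$ (equivalently one could use the finiteness of the Erd\H{o}s--Ginzburg--Ziv constant $\mathsf{s}(G)$ and peel off zero-sum subsequences of length exactly $m$), and this is precisely where a naive attempt breaks down: peeling off a zero-sum subsequence known only to have length at most $\mathsf{D}(G)$ gives merely $\mathsf{D}_{k+1}(G)\le\mathsf{D}_k(G)+\mathsf{D}(G)$, which does not force $f$ to be non-increasing. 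Everything else — finiteness of the $\mathsf{D}_k(G)$, the lower bound via $g^{km-1}$, and the ``eventually constant'' conclusion — is routine.
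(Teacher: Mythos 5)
Your proof is correct: the finiteness bound $\mathsf{D}_k(G)\le k\mathsf{D}(G)$, the lower bound $\mathsf{D}_k(G)\ge k\exp(G)$ via $g^{k\exp(G)-1}$, and the peeling step $\mathsf{D}_{k+1}(G)\le\max\{\mathsf{D}_k(G)+\exp(G),\eta(G)\}$ all check out, and together they do force $\mathsf{D}_k(G)-k\exp(G)$ to stabilize. The paper itself gives no proof but cites \cite[Lemma 5.1]{FreezeSchmid10}, whose argument is essentially this same one (eventual non-increase of the normalized sequence via the finiteness of $\eta(G)$, combined with a lower bound), so nothing further is needed.
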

Let $k_{\mathsf{D}}(G)$ denote the smallest possible value of $k_0$ in the above theorem.

\medskip
By $\eta(G)$ we denote the smallest non-negative integer $t$ such that every sequence $S$ over $G$ of length $|S| \ge t$ contains a non-empty zero-sum subsequence $S' \mid S$ of length $|S'| \le \exp(G)$. Such a subsequence is called a \emph{short zero-sum subsequence}.

\medskip
By $\mathsf{s}(G)$ we denote the smallest non-negative integer $t $ such that every sequence $S$ over $G$ of length $|S| \ge t$ contains a zero-sum subsequence $S' \mid S$ of length $|S'| = \exp(G)$. The number $\mathsf{s}(G)$ is called the \emph{Erd\H{o}s--Ginzburg--Ziv constant} of the group $G$. 

It is not hard to see that  $\mathsf{s}(G) \ge \eta(G) + \exp(G) - 1$  holds for each finite abelian group $G$.
It was conjectured by Gao that in fact equality always holds (see \cite[Conjecture 6.5]{GaoGero06}). 

\begin{conjecture}[Gao] \label{GaoConjecture} For every finite abelian group $G$, one has
\[
\mathsf{s}(G) = \eta(G) +  \exp(G)-1.
\]
\end{conjecture}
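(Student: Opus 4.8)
The plan is to prove the two inequalities separately, writing $n = \exp(G)$ throughout. The lower bound $\mathsf{s}(G) \ge \eta(G) + n - 1$ is elementary and I would dispose of it first. By the definition of $\eta(G)$ there is a sequence $S_0$ over $G$ of length $\eta(G) - 1$ containing no non-empty zero-sum subsequence of length at most $n$. Consider $T = 0^{\,n-1} S_0$, of length $\eta(G) + n - 2$. Any zero-sum subsequence of $T$ of length exactly $n$ consists of $j$ copies of $0$, with $0 \le j \le n-1$, together with a subsequence $S'$ of $S_0$ satisfying $|S'| = n - j$ and $\sigma(S') = 0$; if $S'$ is non-empty it is a short zero-sum subsequence of $S_0$, which is impossible, and if $S'$ is empty then $j = n$, whereas only $n-1$ copies of $0$ are available. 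Hence $T$ has no zero-sum subsequence of length $n$, so $\mathsf{s}(G) > |T|$, that is $\mathsf{s}(G) \ge \eta(G) + n - 1$.

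For the reverse inequality $\mathsf{s}(G) \le \eta(G) + n - 1$ — the substantive content of the conjecture, to be established for every finite abelian group $G$ — I would argue by extraction and recombination of short zero-sum subsequences. Let $S$ be any sequence over $G$ with $|S| = \eta(G) + n - 1$; the goal is to produce a zero-sum subsequence of length exactly $n$. Greedily peel off disjoint short zero-sum subsequences: as long as the untouched part of $S$ has length at least $\eta(G)$, it contains a non-empty zero-sum subsequence of length at most $n$, which I extract. This yields disjoint zero-sum subsequences $U_1, \dots, U_m$ with each $|U_i| \in [1,n]$ and remainder $R = S (U_1 \cdots U_m)^{-1}$ of length $|R| \le \eta(G) - 1$, so that the extracted part has total length
\[
\sum_{i=1}^m |U_i| = |S| - |R| \ge (\eta(G) + n - 1) - (\eta(G) - 1) = n .
\]
One would then like to select a sub-collection of the $U_i$ whose total length is exactly $n$; the concatenation of such a sub-collection is automatically a zero-sum subsequence of length $n$, completing the argument.

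The recombination step is exactly where the main obstacle lies, and it is the reason the statement is phrased as a conjecture rather than a theorem. A priori the multiset of lengths $\{|U_i|\}$ need not admit a sub-collection summing to $n$: one knows only that $\sum_i |U_i| \ge n$ with each summand in $[1,n]$, and passing to a partial piece of a single $U_i$ is illegitimate, since a proper subsequence of a minimal zero-sum sequence is not zero-sum. To control the lengths one must re-extract more cleverly, using the structure of $G$: natural inputs are sharp information on the multiwise constants $\mathsf{D}_k(G)$ and $k_{\mathsf{D}}(G)$ recalled above, inductive reductions to proper subgroups and quotients, and hypotheses such as Property D that force many short zero-sum subsequences of equal or unit length. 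For the specific groups $G \simeq C_2 \oplus C_{n_2} \oplus C_{n_3}$ the exact value of $\eta(G)$ together with these tools should make the recombination tractable, whereas carrying the same scheme through for an \emph{arbitrary} finite abelian group is precisely the part that remains open; I would expect the decisive difficulty to be exactly the passage from a family of short zero-sum subsequences of prescribed total length to a single one of length exactly $\exp(G)$.
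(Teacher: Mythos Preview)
The statement is a \emph{conjecture}, and the paper does not prove it in general; there is no ``paper's own proof'' of the full statement to compare against. Your lower-bound argument is correct and is exactly the standard construction the paper alludes to in the sentence preceding the conjecture. You are also right that the reverse inequality is the open part, and your diagnosis of the obstacle in the greedy extraction scheme is accurate: nothing forces the multiset $\{|U_i|\}$ to admit a subset summing to $\exp(G)$.

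Where the paper \emph{does} establish the conjecture --- namely for $G \simeq C_2 \oplus C_{2m} \oplus C_{2mn}$ with $n=1$ or $m$ having Property~D --- the approach is not extraction-and-recombination of short zero-sums. Instead, Lemma~\ref{lem_exp-1} reduces the existence of a zero-sum of length $\exp(G)$ to finding a subsequence $C \mid S$ of length at least $\lfloor (\exp(G)-1)/2 \rfloor$ together with an element $h$ such that $jh \in \Sigma_j(C)$ for every $j \le |C|$; such a $C$ is ``flexible'' enough to pad any short zero-sum up to length exactly $\exp(G)$. The production of $C$ is then carried out via the inductive method over the quotient $G/H \simeq C_2^3$, using the structural description of extremal sequences over the rank-two subgroup $H$ (Theorem~\ref{Wolfgang bis_v} and Lemma~\ref{lem_sums}) and the stability result of Lemma~\ref{stability_s}. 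So the paper sidesteps your recombination obstacle not by selecting whole $U_i$'s whose lengths sum to $\exp(G)$, but by manufacturing a single subsequence that can contribute a zero-sum of \emph{any} prescribed length.
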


We now recall the values of $\eta(G)$ and $\mathsf{s}(G)$ as well as the ones of the multiwise Davenport constants for groups of rank at most two, see \cite[Theorem 5.8.3]{GeroKoch06} and \cite[Theorem 6.1.5]{GeroKoch06}. We parametrize these groups as  $C_m \oplus C_{mn}$ with 
$m, n \ge 1$ integers rather than $C_{n_1} \oplus C_{n_2}$ with integers $n_1 \mid n_2$ since later on this will be more convenient. 

\begin{theorem}\label{ranktwo}
Let $m, n \ge 1$ be two integers. Then
\[
\eta(C_m \oplus C_{mn})=2m+mn-2 \quad \text{ and } \quad \mathsf{s}(C_m \oplus C_{mn})=2m+2mn-3.
\]
In addition, for every integer $k \ge 1$,
\[
\mathsf{D}_k(C_m \oplus C_{mn})=m+k(mn)-1.
\]
In particular, choosing $m=1$, we have $\eta(C_n)=n$ and $\mathsf{s}(C_n)=2n-1$ as well as $\mathsf{D}_k(C_n)=kn$ for all $k \ge 1$.
\end{theorem}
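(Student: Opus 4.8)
The overall plan is to prove the lower bounds by explicit constructions and to obtain the upper bounds by assembling classical results: the Erd\H{o}s--Ginzburg--Ziv theorem for cyclic groups, Olson's determination of the Davenport constant of rank-two groups, namely $\mathsf{D}(C_m \oplus C_{mn}) = m + mn - 1$, the resolution of Kemnitz's conjecture by Reiher, namely $\mathsf{s}(C_m \oplus C_m) = 4m - 3$ (equivalently $\eta(C_m \oplus C_m) = 3m - 2$), and the fact --- a theorem for groups of rank at most two --- that $\mathsf{s}(G) = \eta(G) + \exp(G) - 1$. For the lower bounds, fix independent $e_1, e_2 \in G = C_m \oplus C_{mn}$ of orders $m$ and $mn$. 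The sequence $e_1^{m-1} e_2^{kmn-1}$ has length $m + kmn - 2$, and each of its nonempty zero-sum subsequences has the form $e_2^{jmn}$ with $1 \le j \le k-1$, so no $k$ of them are pairwise disjoint; this gives $\mathsf{D}_k(G) \ge m + kmn - 1$. The sequence $e_1^{m-1} e_2^{m-1} (e_1+e_2)^{mn-1}$ has length $2m + mn - 3$, and one checks directly that its nonempty zero-sum subsequences are exactly the $e_1^a e_2^a (e_1+e_2)^{mn-a}$ with $1 \le a \le m-1$, all of length $mn + a > \exp(G)$; hence $\eta(G) \ge 2m + mn - 2$. Appending $0^{mn-1}$ to this last sequence yields a sequence of length $2m + 2mn - 4$ with no zero-sum subsequence of length exactly $mn$, so $\mathsf{s}(G) \ge 2m + 2mn - 3$.

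For the upper bound on $\mathsf{D}_k$ I would induct on $k$, the base case $k = 1$ being Olson's theorem. For $k \ge 2$ and $|S| = m + kmn - 1$ one has $|S| \ge \eta(G) = 2m + mn - 2$, so $S$ contains a short zero-sum subsequence $T$ with $|T| \le \exp(G) = mn$; then $|S T^{-1}| \ge m + (k-1)mn - 1 = \mathsf{D}_{k-1}(G)$, and the induction hypothesis produces $k-1$ pairwise disjoint nonempty zero-sum subsequences of $S T^{-1}$, which together with $T$ give $k$ such subsequences of $S$. For the upper bound on $\eta$ I would apply the inductive method to the subgroup $H = \langle e_1, n e_2 \rangle \simeq C_m \oplus C_m$, so that $G/H \simeq C_n$ and $\exp(G) = \exp(H)\,\exp(G/H)$: from a sequence $S$ over $G$ one extracts pairwise disjoint subsequences whose images in $G/H$ are short zero-sum sequences, records their ($H$-valued) sums, and, once enough of them have been collected, finds among those sums a short zero-sum subsequence inside $H$ via $\eta(H) = 3m - 2$; since each recorded piece has length at most $\exp(G/H)$ and at most $\exp(H)$ of them are used, the union of the corresponding subsequences of $S$ is a zero-sum subsequence of $S$ of length at most $\exp(G)$. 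Finally, the upper bound $\mathsf{s}(G) \le \eta(G) + \exp(G) - 1 = 2m + 2mn - 3$ is the rank-two case of the identity $\mathsf{s}(G) = \eta(G) + \exp(G) - 1$.

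The main obstacle is the upper bound for $\eta$ (equivalently, in view of the last identity, for $\mathsf{s}$). On one hand it rests on the deep resolution of Kemnitz's conjecture for the homogeneous group $C_m \oplus C_m$; on the other hand, the naive greedy version of the reduction from $C_m \oplus C_m$ to $C_m \oplus C_{mn}$ is lossy when $n > 1$, so one needs a sharpened bookkeeping in the inductive method to land exactly on $2m + mn - 2$ rather than merely on a bound of the right order of magnitude (alternatively, one may extract the value of $\eta$ from that of $\mathsf{s}$ through the relation above). The lower-bound constructions and the induction for $\mathsf{D}_k$ are routine by comparison.
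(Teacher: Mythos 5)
Your lower-bound constructions are correct, and your induction for the upper bound on $\mathsf{D}_k$ (peel off a short zero-sum subsequence, which exists because $m+kmn-1\ge \eta(G)$ for $k\ge 2$, and recurse down to Olson's theorem) is sound; it is essentially the proof of \cite[Theorem 6.1.5(1)]{GeroKoch06}. Note, however, that the paper does not prove this statement at all: Theorem \ref{ranktwo} is recalled as known, with references to \cite[Theorems 5.8.3 and 6.1.5]{GeroKoch06}, so the only thing your argument can be measured against is the content hidden behind those citations.

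That content is precisely where your proposal has a genuine gap: the upper bound $\eta(C_m\oplus C_{mn})\le 2m+mn-2$ (equivalently $\mathsf{s}(C_m\oplus C_{mn})\le 2m+2mn-3$) is not established. The inductive method you describe, with $H\simeq C_m\oplus C_m$ and $G/H\simeq C_n$, gives only $\eta(G)\le \exp(G/H)\bigl(\eta(H)-1\bigr)+\eta(G/H)=3mn-2n$, which exceeds the claimed value by $2(m-1)(n-1)$; for $m,n\ge 2$ no bookkeeping of the greedy extraction removes this loss, because a block with zero-sum image in $G/H$ may cost up to $n$ elements of $S$ while contributing only one element to the $H$-valued sequence. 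You acknowledge the problem but resolve it only by pointing to an unspecified ``sharpened bookkeeping'' or, alternatively, by deducing $\eta$ from $\mathsf{s}$ via the rank-two identity $\mathsf{s}=\eta+\exp-1$ --- yet your sole argument for the upper bound on $\mathsf{s}$ is that same identity read in the other direction, so the two halves defer to each other and neither value is actually proved. Closing the gap requires a genuinely different reduction from $C_m\oplus C_{mn}$ to the Kemnitz case $C_m^2$: in the literature one proves an inequality with only an additive loss, of the shape $\mathsf{s}(C_m\oplus C_{mn})\le \mathsf{s}(C_m^2)+2(mn-m)$ (Gao), rather than the multiplicative loss inherent in the naive inductive method. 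As written, your proposal reduces the theorem to exactly the one step that constitutes its real mathematical content.
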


Theorem \ref{ranktwo} shows that Conjecture \ref{GaoConjecture} is true for all finite abelian groups of rank at most two. 

\medskip
In the case of groups of rank at most two even the structure of extremal examples is well-understood. For cyclic groups in fact more is known, see, e.g., \cite{SavChen07,SavChen08}, yet we only recall what is needed in this paper.

A sequence $S$ over $C_n$ of length $n-1 = \eta(C_n)-1$ has no short zero-sum subsequence (and thus no non-empty zero-sum subsequence) if and only if $S=b^{n-1}$ for some generating element $b$ of $C_n$. 
A sequence $S$ over $C_n$ of length $2n-2= \mathsf{s}(C_n)-1$ has no zero-sum subsequence of length $n$ if and only if $S=c^{n-1}(c+b)^{n-1}$ for some $c \in C_n$ and some generating element $b$ of $C_n$. 

For the $\eta$-constant one has the following result. It was obtained in \cite{Wolfgang}; a result of Reiher \cite{reiherB} was crucial in the proof.  

\begin{theorem}
\label{Wolfgang_v} 
Let $H \simeq C_{m} \oplus C_{mn}$ with integers  $m\ge 2$ and $n \ge 1$. 
Every sequence $S$ over $H$ of length $\left|S\right|=\eta(H)-1$ not containing any short zero-sum subsequence has the following form:
\[S = b^{m-1}_1b^{sm-1}_2 \left(-xb_1 + b_2\right)^{(n+1-s)m-1}\]
where $\{b_1,b_2\}$ is a generating set of $H$ with $\ord(b_2) = mn$, $s \in [1,n]$, $x \in [1, m]$ with $\gcd(x,m)=1$ and either  
\begin{enumerate}
\item 
 $\{b_1,b_2\}$ is an independent generating set of $H$, or 
\item  $s= n$ and $x=1$.
\end{enumerate}
\end{theorem}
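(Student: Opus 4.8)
The plan is to argue by induction on $|H|$. For the base case $H\simeq C_p\oplus C_p$ with $p$ prime, the assertion is exactly the description of the sequences of length $\eta(C_p\oplus C_p)-1=3p-3$ over $C_p\oplus C_p$ that contain no short zero-sum subsequence; this is Reiher's theorem \cite{reiherB} (which also yields $\mathsf{s}(C_p\oplus C_p)=4p-3$, used below), and the alternative (1)/(2) is vacuous here since for $n=1$ any generating pair of $H$ is automatically independent. For the inductive step fix a prime $p\mid m$ (possible since $m\ge 2$) and put $K=pH$, so that $K\simeq C_{m/p}\oplus C_{(m/p)n}$ --- which is cyclic, equal to $C_n$, precisely when $m=p$ --- and $H/K\simeq C_p\oplus C_p$. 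One records the numerical identity
\[
\eta(H)=\eta(H/K)+\exp(H/K)\,\bigl(\eta(K)-1\bigr),
\]
both sides being equal to $2m+mn-2$ by Theorem \ref{ranktwo}.

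Now let $S$ have length $\eta(H)-1$ and no short zero-sum subsequence, and let $\pi\colon H\to H/K$ be the quotient map, extended to sequences. I would greedily extract pairwise disjoint subsequences $T_1,\dots,T_r\mid S$, each chosen so that $\pi(T_i)$ is a non-empty zero-sum sequence over $H/K$ of length at most $\exp(H/K)=p$ --- hence $\sigma(T_i)\in K$ --- until $S_0:=S(T_1\cdots T_r)^{-1}$ is such that $\pi(S_0)$ has no such subsequence, i.e.\ $|\pi(S_0)|\le\eta(H/K)-1=3p-3$. Since $|T_i|=|\pi(T_i)|\le p$, one gets $r\ge(|S|-(3p-3))/p=\eta(K)-1$. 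Conversely, if $r\ge\eta(K)$ then the length-$\ge\eta(K)$ sequence $\sigma(T_1)\cdots\sigma(T_r)$ over $K$ has a non-empty zero-sum subsequence indexed by some $I$ with $|I|\le\exp(K)=mn/p$, so $\prod_{i\in I}T_i$ is a non-empty zero-sum subsequence of $S$ of length at most $|I|\,p\le mn=\exp(H)$, a short zero-sum subsequence --- a contradiction. Hence $r=\eta(K)-1$ and all the estimates are equalities: $|T_i|=p$ for every $i$, $|S_0|=3p-3$, $\pi(S_0)$ has no short zero-sum subsequence over $C_p\oplus C_p$, and (by the same argument applied to subsequences of it) $U:=\sigma(T_1)\cdots\sigma(T_r)$, of length $\eta(K)-1$, has no short zero-sum subsequence over $K$. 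Thus $\pi(S_0)$ is of the extremal form over $C_p\oplus C_p$ (base case), and $U$ is of the extremal form over $K$ --- by the induction hypothesis when $m/p\ge 2$ (as $|K|<|H|$), or by the recalled description of maximal short-zero-sum-free sequences over $C_n$ when $m=p$.

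The remaining, and main, task is to reassemble $S$. The key is that one has wide latitude in choosing the $T_i$: for each $i$ the sequence $\pi(S_0T_i)$ has length $(3p-3)+p=4p-3=\mathsf{s}(C_p\oplus C_p)$, hence contains a zero-sum subsequence $W$ of length $p$; lifting $W$ to a subsequence $W'\mid S_0T_i$ with $\pi(W')=W$ replaces the extraction by another one realising the maximal number $r$ of parts, whence $\pi\bigl((S_0T_i)(W')^{-1}\bigr)$ is again extremal over $C_p\oplus C_p$. Combining this with the known shape of those extremal sequences --- three elements $\bar g_1,\bar g_2,\bar g_3$, each of multiplicity $p-1$, any two forming a basis --- I would show that each $\pi(T_i)$ is supported on $\{\bar g_1,\bar g_2,\bar g_3\}$, and then --- the length-$p$ zero-sum subsequences supported on such a triple being exactly $\bar g_1^{p},\bar g_2^{p},\bar g_3^{p}$ --- that each $\pi(T_i)$ equals $\bar g_{j(i)}^{p}$ for some $j(i)\in\{1,2,3\}$; in particular $\pi(S)$ is supported on $\{\bar g_1,\bar g_2,\bar g_3\}$. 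Examining, coset by coset, the multisets of $K$-differences of the terms of $S$ with respect to fixed representatives, and using the extremal structure of $U$ over $K$ together with the fact that no regrouping of the terms may create a short zero-sum subsequence, one forces each such multiset to be constant, so that $S$ is supported on at most three elements $b_1,b_2,b_3$ of $H$ with $\pi(b_j)=\bar g_j$.

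This last part, together with the final bookkeeping, is where I expect the main difficulty. One must match the multiplicities of $b_1,b_2,b_3$ against the data from $\pi(S_0)$ and from $U$ to obtain the exponents $m-1$, $sm-1$, $(n+1-s)m-1$; recover the relation $b_3=-xb_1+b_2$ and the coprimality $\gcd(x,m)=1$ from the corresponding relations over $H/K$ and over $K$; check that $\ord(b_2)=mn$ and that $\{b_1,b_2\}$ generates $H$; and finally decide the alternative (1)/(2), by showing that if the lift $\{b_1,b_2\}$ of a basis happens not to be independent in $H$, then the only configuration not producing a short zero-sum subsequence has $s=n$ and $x=1$. Throughout, the degenerate situations --- $p=2$; $m/p=1$ (so $K$ cyclic); and the boundary parameter values $s\in\{1,n\}$, $x\in\{1,m\}$ --- must be handled separately, and keeping all admissible re-extractions simultaneously under control is what makes the reconstruction delicate; the steps preceding it are routine once the numerical identity and the extraction bound are in place.
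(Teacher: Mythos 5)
First, note that the paper does not prove Theorem \ref{Wolfgang_v} at all: it is quoted from \cite{Wolfgang}, where the proof proceeds by reducing the inverse $\eta$-problem for $C_m\oplus C_{mn}$ to the known inverse problem for the Davenport constant of rank-two groups (resting on Property B, i.e.\ Reiher's theorem), not by an induction on $|H|$ through quotients $H/pH\simeq C_p\oplus C_p$ as you propose. So your route is genuinely different from the source's; the question is whether it is a proof, and it is not. Your numerical identity and the extraction count are fine (they give $r=\eta(K)-1$, $|T_i|=p$, $|S_0|=3p-3$, and the extremality of $\pi(S_0)$ and of $U$), and the ``latitude'' observation via $\mathsf{s}(C_p^2)=4p-3$ is legitimate. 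But everything from ``Combining this with the known shape\ldots'' onward is stated in the conditional (``I would show\ldots'', ``one forces\ldots'') and is precisely the content of the theorem: showing that every $\pi(T_i)$ is a constant sequence $\bar g_{j(i)}^{\,p}$, that within each fiber of $\pi$ the terms of $S$ are actually equal (not merely congruent mod $K$), matching the multiplicities against the structure of $U$, recovering the relation $b_3=-xb_1+b_2$ with $\gcd(x,m)=1$, and settling the dichotomy (1)/(2). None of this is argued; you yourself flag it as ``where I expect the main difficulty.'' A strategy outline with the reconstruction step missing is not a proof of an inverse theorem, since the reconstruction is the theorem.

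There is also a concrete error in the one structural input you do commit to. You describe the extremal sequences over $C_p\oplus C_p$ of length $3p-3$ as ``three elements, each of multiplicity $p-1$, any two forming a basis,'' and deduce that the only length-$p$ zero-sum subsequences supported on such a triple are $\bar g_1^{\,p},\bar g_2^{\,p},\bar g_3^{\,p}$. Pairwise independence is necessary but not sufficient: for instance $e_1^{\,p-1}e_2^{\,p-1}(2e_1+2e_2)^{p-1}$ has pairwise-independent support, yet (already for $p=3$) it contains the short zero-sum subsequence $e_1\,e_2\,(2e_1+2e_2)$ of length $p$, so it is not extremal and your ``exactly $\bar g_j^{\,p}$'' claim fails for it. The correct base-case structure is the normalized form $b_1^{\,p-1}b_2^{\,p-1}(-xb_1+b_2)^{p-1}$ (i.e.\ the case $m=p$, $n=1$ of the theorem itself), for which your claim about length-$p$ zero-sums does hold; you would need to quote and use that precise statement. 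Even with that repaired, the fiber-by-fiber rigidity argument and the final bookkeeping remain unproven, so the proposal stands as a plausible plan with the essential steps missing rather than a proof.
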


For the Erd\H{o}s--Ginzburg--Ziv constant a similar result is expected to hold true, yet it is so far only known conditionally or in special cases.  

A positive integer $m$ is said to have \emph{Property D} if every sequence $S$ over $C_m^2$ of length $|S|=\mathsf{s}(C_m^2)-1=4m-4$ and containing no zero-sum subsequence of length $m$ has the form $S=T^{m-1}$ for some sequence $T$ over $C_m^2$. 
We include the trivial case $m=1$ in our definition as it simplifies the statement of certain results. This property was introduced by Gao who made the following conjecture \cite[Conjecture 2]{Gao00}.

\begin{conjecture}[Gao]
Every positive integer has Property D.
\end{conjecture}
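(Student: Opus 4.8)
The statement to be established is Gao's conjecture that every positive integer has Property D, which to the best of my knowledge is still open; accordingly, what I offer is a strategy that would reduce it to one core case, together with a sketch of how that case might be attacked.

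The plan rests on two ingredients. The first is a \emph{multiplicativity} statement: if $m_1$ and $m_2$ have Property D, then so does $m_1 m_2$; this reduces the conjecture to the case $m = p$ prime, exactly as Kemnitz's theorem $\mathsf{s}(C_n^2) = 4n - 3$ is deduced from its prime case. To prove multiplicativity I would run the inductive method. Start from a sequence $S$ over $C_{m_1 m_2}^2$ of length $4m_1 m_2 - 4$ with no zero-sum subsequence of length $m_1 m_2$, reduce modulo $m_1$ so that $C_{m_1}^2$ appears as a quotient, repeatedly extract disjoint zero-sum subsequences of length $m_1$ in the quotient (the sum of each such block then lying in $m_1 C_{m_1 m_2}^2 \cong C_{m_2}^2$), and collect these sums into a derived sequence $\overline{S}$ over $C_{m_2}^2$. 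The hypothesis on $S$ forces $\overline{S}$ to be extremal for $\mathsf{s}(C_{m_2}^2)$, hence, by Property D in $C_{m_2}^2$, of the form $\overline{T}^{\,m_2 - 1}$; one then bootstraps, invoking Property D in $C_{m_1}^2$ on the extracted blocks together with the analogous inverse results for $\mathsf{D}(C_m^2) = 2m - 1$ and $\eta(C_m^2) = 3m - 2$ --- Properties B and C in Gao's terminology --- to reconstruct $S = T^{\,m_1 m_2 - 1}$. Since the reduction leaves essentially no slack in the lengths, making every step of this bootstrap tight is exactly what forces one to prove Properties B, C and D simultaneously by a joint induction.

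For the remaining prime case I would follow Reiher's route to Kemnitz's theorem. Identify $C_p^2$ with $\mathbb{F}_p^2$, write $S = \prod_{i=1}^{4p-4}(a_i, b_i)$, and consider the polynomials $\sum_i x_i^{p-1}$, $\sum_i a_i x_i^{p-1}$ and $\sum_i b_i x_i^{p-1}$ in $4p - 4$ variables; their total degree $3(p-1)$ is less than the number of variables, so the Chevalley--Warning theorem yields nontrivial common zeros, each of which encodes a zero-sum subsequence of length a positive multiple of $p$, hence of length $p$, $2p$ or $3p$. Under the hypothesis that no zero-sum subsequence has length exactly $p$, one must push this counting far enough to show not merely that length $4p - 3$ forces such a subsequence, but that at length $4p - 4$ the only obstructing configurations are, up to the action of the affine group of $\mathbb{F}_p^2$, sequences $T^{\,p-1}$ with $|T| = 4$ drawn from a short explicit list (such as $T = e_1 e_2 (e_1 + e_2)\, 0$). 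Turning the ``few solutions'' output of the polynomial method into this rigid combinatorial conclusion is where the real difficulty lies, and I would expect it to require a delicate analysis of the near-sharp cases of the Chevalley--Warning bound, supplemented by Cauchy--Davenport and Vosper-type input in each coordinate.

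The main obstacle is precisely this last step. The polynomial method is tailored to existence statements --- the value of $\mathsf{s}(C_p^2)$ --- whereas describing the full list of extremal sequences needs structural information that, so far, no one has supplied uniformly in $p$; the known verifications rely either on small $p$ or on additional hypotheses, which is why the conjecture remains open. A secondary obstacle is keeping the multiplicative step unconditional: the inductive method typically loses an additive constant in the length, so recovering the exact extremal length $4m - 4$ rather than $4m - 4 + O(1)$ is the precise point at which the joint treatment of Properties B, C and D becomes indispensable.
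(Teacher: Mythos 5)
This statement is labelled a \emph{conjecture} in the paper, and the paper offers no proof of it; it merely records what is known, namely that Property D is multiplicative (a theorem of Gao, \cite[Theorem 1.4]{Gao00}) and that it holds for $p \in \{2,3,5,7\}$, hence for all integers of the form $2^{\alpha}3^{\beta}5^{\gamma}7^{\delta}$. You correctly recognise that the statement is open and you do not claim to prove it, so there is no completed argument here to check against a paper proof that does not exist. What you have written is a research programme, and as such it cannot be accepted as a proof of the statement.

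On the substance of your programme: the multiplicativity step you propose to establish is already known unconditionally, so the entire difficulty is concentrated in the prime case, exactly as you say. Your own diagnosis of the obstacle there is accurate and is worth emphasising: Reiher's Chevalley--Warning machinery (and its refinements counting solutions modulo $p$) is designed to prove the \emph{existence} of zero-sum subsequences of length $p$ in sequences of length $4p-3$, whereas Property D is an \emph{inverse} statement about the structure of all extremal sequences of length $4p-4$, and no one has extracted such structural rigidity from the polynomial method uniformly in $p$. Until that step is supplied, the sketch remains a sketch; in the context of the present paper the correct course is simply to cite the conjecture and the partial results, which is what the authors do, and to state results depending on it conditionally, as in Theorem \ref{EGZ}.
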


\medskip
For the time being, Property D has been proved to be multiplicative \cite[Theorem 1.4]{Gao00} in the sense that whenever $m,n$ have this property, then so does $mn$. 
Also, Property D is known to hold for  $p \in \{2,3,5,7\}$, hence for any $m$ of the form $m=2^{\alpha}3^{\beta}5^{\gamma}7^{\delta}$, where $\alpha,\beta,\gamma,\delta \ge 0$ are non-negative integers (see \cite[Theorem 1.5]{Gao00} and \cite[Theorem 3.1]{SuryThanga02}).

\medskip
Whenever an integer $m $ satisfies Property D, the sequences over $H \simeq C_m \oplus C_{mn}$ of length $\mathsf{s}(H)-1$ and not containing any zero-sum subsequence of length $\exp(H)$ can be fully characterized for every integer $n \ge 1$ (see \cite[Theorem 3.1(2)]{Wolfgang}).

\begin{theorem}
\label{Wolfgang bis_v} 
Let $H \simeq C_{m} \oplus C_{mn}$, where  $m \ge 2$ satisfies Property D and $n \ge 1$. 
Every sequence $S$ over $H$ of length $\left|S\right|=\mathsf{s}(H)-1$ not containing any zero-sum subsequence of length $\exp(H)$ has the following form:
\[S = c^{tm-1} (b_1+c)^{(n+1 -t)m-1}(b_2+c)^{sm-1} \left(-xb_1 + b_2+c \right)^{(n+1-s)m-1}\]
where $c\in H$, $\{b_1,b_2\}$ is a generating set of $H$ with $\ord(b_2) = mn$, $s,t \in [1,n]$,  $x \in [1, m]$ with $\gcd(x,m)=1$ and either   
\begin{enumerate}
\item
 $\{b_1,b_2\}$ is an independent generating set of $H$, or 
\item  $s=t= n$ and $x=1$.
\end{enumerate}
\end{theorem}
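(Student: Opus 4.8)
\medskip

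The plan is to run the inductive (quotient) method, reducing the problem to the already known structure of $\mathsf{s}$-extremal sequences over cyclic groups and over $C_m^2$ (the latter being exactly Property~D). Throughout put $e=\exp(H)=mn$; by Theorem~\ref{ranktwo} one has $\mathsf{s}(H)=\eta(H)+e-1=2m+2mn-3$, so $\left|S\right|=2mn+2m-4$. Fix a subgroup $K\le H$ with $K\simeq C_n$ and $H/K\simeq C_m\oplus C_m$, and let $\phi\colon H\to H/K$ be the canonical epimorphism; then $\exp(H/K)=m$ and $\mathsf{s}(H/K)=4m-3$, again by Theorem~\ref{ranktwo}. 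Recall also that for any $c\in H$ one has $ec=0$, so that translating $S$ by a constant preserves the property of having no zero-sum subsequence of length $e$; we shall use this normalisation freely.

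\emph{Extraction over the quotient.} Since $\mathsf{s}(H/K)=4m-3$, every subsequence of $S$ of length at least $4m-3$ contains a subsequence $T$ of length $m$ with $\phi(\sigma(T))=0$, i.e.\ $\sigma(T)\in K$. Removing such blocks one at a time, and noting that each removal lowers the length by $m$ while $\left|S\right|\equiv 4m-4\pmod m$, one obtains disjoint subsequences $T_1,\dots,T_{2n-2}$ of $S$, each of length $m$ and with $\sigma(T_i)\in K$, together with a remainder $R=S(T_1\cdots T_{2n-2})^{-1}$ of length $4m-4=\mathsf{s}(H/K)-1$. (For $n=1$ the subgroup $K$ is trivial, no block is extracted, $R=S$, and the argument below collapses to the definition of Property~D.) This decomposition is highly non-unique, and at the end it will be re-chosen to meet an additional extremal requirement.

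\emph{Two layers of rigid structure.} The block sums $h_i:=\sigma(T_i)$ lie in $K\simeq C_n$, and $h_1\cdots h_{2n-2}$ is a sequence over $K$ of length $\mathsf{s}(C_n)-1$. It has no zero-sum subsequence of length $n$, for otherwise the union of the corresponding blocks would be a zero-sum subsequence of $S$ of length $nm=e$. By the description of $\mathsf{s}(C_n)$-extremal sequences recalled above, $h_1\cdots h_{2n-2}=d^{\,n-1}(d+a)^{\,n-1}$ for some $d\in K$ and some generator $a$ of $K$; consequently the sums of the various choices of $n-1$ of the $h_i$ run through all of $K$. It follows that $\phi(R)$ has no zero-sum subsequence of length $m$: if $R'\mid R$ had $\left|R'\right|=m$ and $\sigma(R')\in K$, then picking $n-1$ of the blocks whose sums add up to $-\sigma(R')$ would produce a zero-sum subsequence of $S$ of length $e$. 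Since $\left|\phi(R)\right|=\mathsf{s}(C_m^2)-1$ and $m$ has Property~D, we conclude $\phi(R)=\overline{T}^{\,m-1}$ for some sequence $\overline{T}$ over $H/K$ of length $4$.

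\emph{Lifting to the global form — the main obstacle.} It remains to promote this quotient information to the asserted shape of $S$, and this is where essentially all the work lies. First one exploits the freedom in the block decomposition, re-selecting the $T_i$ (and hence $R$) so that each block is supported on a single coset of $K$ and $\overline{T}$ is as simple as possible; one then shows that $S$ is supported on at most four cosets of $K$ and meets each of them in a single element, reducing $S$ to four group elements with multiplicities summing to $\left|S\right|$. Feeding the hypothesis that $S$ has no zero-sum subsequence of length $e$ into subsequences built from $j$ of the blocks together with an $(n-j)m$-term divisor of $R$, and matching the resulting forbidden subsums against those available from the pattern $d^{\,n-1}(d+a)^{\,n-1}$, forces the four multiplicities to be $tm-1,\ (n+1-t)m-1,\ sm-1,\ (n+1-s)m-1$ and the four elements to be $c,\ b_1+c,\ b_2+c,\ -xb_1+b_2+c$ with $\{b_1,b_2\}$ a generating set, $\ord(b_2)=mn$ and $\gcd(x,m)=1$; the elementary analysis underlying Theorem~\ref{Wolfgang_v} can be reused here to organise part of the bookkeeping. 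The dichotomy in the conclusion is settled precisely at this point: if $\{b_1,b_2\}$ cannot be chosen independent — equivalently, the natural candidate for $b_1$ has order $mn$ rather than $m$ — then the no-$e$-zero-sum condition can be met only when the two multiplicities not involving $b_1$ are maximal, i.e.\ $s=t=n$, and $x=1$. The case $n=1$ serves as the base and is read off directly from Property~D by determining which length-$4$ sequences $\overline{T}$ over $C_m^2$ make $\overline{T}^{\,m-1}$ free of length-$m$ zero-sum subsequences; for $n\ge 2$ one may alternatively reduce modulo a subgroup of prime order $p\mid n$ instead of modulo all of $K$, in which case $R$ becomes an $\mathsf{s}(C_m\oplus C_{mn/p})$-extremal sequence and the claimed form follows by induction on $n$. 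The difficulty throughout this last step is to control simultaneously the interaction between the extracted blocks and the remainder and the passage from $H/K$ back to $H$; everything preceding it is routine.
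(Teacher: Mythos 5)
This statement is not proved in the paper at all: it is quoted from \cite[Theorem 3.1(2)]{Wolfgang}, where the argument is a substantial piece of work building on the inverse problems for the Davenport constant and the $\eta$-invariant of rank-two groups (ultimately on Property B/C via Reiher), so there is no paper-internal proof to compare against. Judged on its own terms, your proposal has a genuine gap. The parts you actually carry out are the easy ones: extracting $2n-2$ blocks $T_i$ of length $m$ with $\sigma(T_i)\in K$, observing that the block-sum sequence over $K\simeq C_n$ must be of the form $d^{\,n-1}(d+a)^{\,n-1}$, and concluding via Property D that $\phi(R)=\overline{T}^{\,m-1}$. All of this only constrains the images $\phi(S)$ and the block sums; it says nothing about the individual elements of $S$ inside the blocks, which can a priori be arbitrary lifts within their $K$-cosets, varying from block to block. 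The entire content of the theorem is the passage from this quotient-level information to the statement that $S$ is supported on exactly four elements with the prescribed multiplicities $tm-1$, $(n+1-t)m-1$, $sm-1$, $(n+1-s)m-1$ and the prescribed mutual relations (including the dichotomy between independence of $\{b_1,b_2\}$ and $s=t=n$, $x=1$), and at this point your text switches to assertions: ``one exploits the freedom in the block decomposition, re-selecting the $T_i$ so that each block is supported on a single coset of $K$'', ``one then shows that $S$ \ldots meets each of them in a single element'', ``forces the four multiplicities to be \ldots'', ``the dichotomy \ldots is settled precisely at this point''. None of these steps is argued, and the first of them is itself a nontrivial claim that may fail for a given decomposition; since the decomposition is highly non-unique, re-choosing it coherently is exactly the kind of interaction between blocks and remainder that makes this inverse problem hard.

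Concretely, knowing that the $h_i$ form the pattern $d^{\,n-1}(d+a)^{\,n-1}$ and that $\phi(R)=\overline{T}^{\,m-1}$ is compatible with sequences $S$ having large support inside each coset of $K$, so some additional mechanism (in \cite{Wolfgang}: the inverse results for $\mathsf{D}$ and $\eta$ over $C_m\oplus C_{mn}$, i.e.\ essentially Theorem \ref{Wolfgang_v} and its Davenport-constant analogue, combined with delicate exchange arguments) is needed to pin down the elements themselves; your sketch acknowledges this (``this is where essentially all the work lies'') but does not supply it. The same applies to the proposed base case $n=1$, where ``determining which length-$4$ sequences $\overline{T}$ make $\overline{T}^{\,m-1}$ free of length-$m$ zero-sum subsequences'' is again the actual inverse problem for $C_m^2$ and is left undone, and to the alternative induction modulo a prime-order subgroup, which is only named. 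As it stands, the proposal is a plausible opening reduction plus a statement of what remains to be proved, not a proof.
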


For definiteness we briefly recap some properties of the generating sets in the above result. Since $\{b_1,b_2\}$ is a generating set of $H$ and $\ord(b_2)=mn$, the equalities
\[
|H| = |\left\langle b_1 \right\rangle + \left\langle b_2 \right\rangle| = \frac{|\left\langle b_1 \right\rangle| |\left\langle b_2 \right\rangle|}{|\left\langle b_1 \right\rangle \cap \left\langle b_2 \right\rangle|}
\]
imply that $\ord(b_1)=md$ where $d=|\left\langle b_1 \right\rangle \cap \left\langle b_2 \right\rangle|$ is a positive divisor of $n$. 
More precisely, we have 
\[
\left\langle mb_1 \right\rangle = \left\langle b_1 \right\rangle \cap \left\langle b_2 \right\rangle = \left\langle m\frac{n}{d}b_2 \right\rangle.
\]
In particular, every element $h \in H$ can be written $h=a_1b_1+a_2b_2$ with $a_1\in [0,m-1]$ and $a_2 \in [0,mn-1]$.
In addition, it is easily seen that $\{b_1,b_2\}$ is an independent generating set of $H$ if and only if $\left\langle b_1 \right\rangle \cap \left\langle b_2 \right\rangle=\{0\}$, that is to say if and only if $d=1$.
Finally, whenever $d > 1$, there is a unique integer $\ell \in [1,d-1]$ relatively prime to $d$ such that $mb_1=\ell m(n/d)b_2$.

\medskip
We end by recalling the result on the Davenport constant for groups of the form $C_2 \oplus C_{2m} \oplus C_{2mn}$, which we mentioned in the introduction and that we need in the proof of our result on the multiwise Davenport constants.  

\begin{theorem}\label{D}
Let $m, n \ge 1$ be two integers. Then 
\[
\mathsf{D}(C_2 \oplus C_{2m} \oplus C_{2mn}) = 2m+2mn.
\]
\end{theorem}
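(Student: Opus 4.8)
The plan is to prove the two inequalities separately. The lower bound $\mathsf{D}(C_2 \oplus C_{2m}\oplus C_{2mn}) \ge 2m+2mn$ is the standard one: choosing an independent generating set $\{e_1,e_2,e_3\}$ with $\ord(e_1)=2$, $\ord(e_2)=2m$ and $\ord(e_3)=2mn$, the sequence $e_1 e_2^{2m-1} e_3^{2mn-1}$ has length $2m+2mn-1$ and is zero-sum free. Everything therefore reduces to showing that every sequence $S$ over $G\simeq C_2 \oplus C_{2m}\oplus C_{2mn}$ with $|S|=2m+2mn$ has a non-empty zero-sum subsequence.

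For the upper bound I would run the inductive method relative to the subgroup $U=2G$. One has $U\simeq C_m\oplus C_{mn}$ and $G/U\simeq C_2^3$; writing $\pi\colon G\to G/U$ for the projection, any family $T_1,\dots,T_j$ of disjoint non-empty subsequences of $S$ with each $\pi(T_i)$ a zero-sum sequence satisfies $\sigma(T_i)\in U$ for every $i$, and if $j\ge \mathsf{D}(U)$ then $\sigma(T_1)\cdots\sigma(T_j)$ has a non-empty zero-sum subsequence over $U$ that lifts to one of $S$. Hence $\mathsf{D}(G)\le \mathsf{D}_{\mathsf{D}(U)}(G/U)$. By Theorem~\ref{ranktwo} we have $\mathsf{D}(U)=\mathsf{D}(C_m\oplus C_{mn})=m+mn-1$, so it remains to control $\mathsf{D}_{m+mn-1}(C_2^3)$. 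A short direct computation — using the pigeonhole principle on the seven non-zero elements of $C_2^3$ together with the fact that a zero-sum subsequence of length $\le 2$ can be split off as soon as the length reaches $\eta(C_2^3)=8$ — gives $\mathsf{D}_1(C_2^3)=4$ and $\mathsf{D}_k(C_2^3)=2k+3$ for every $k\ge 2$. This yields $\mathsf{D}(G)\le 2m+2mn+1$, which already settles the case $(m,n)=(1,1)$, that is $G\simeq C_2^3$, but is off by one in all other cases.

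The main obstacle, then, is to improve this bound by a single unit when $(m,n)\neq(1,1)$. For this I would argue by contradiction: assume $S$ is zero-sum free with $|S|=2m+2mn$. Then $\pi(S)$ cannot contain $m+mn-1$ disjoint non-empty zero-sum subsequences, while at the same time $|\pi(S)|=\mathsf{D}_{m+mn-1}(C_2^3)-1$ and $|\pi(S)|\ge \mathsf{D}_{m+mn-2}(C_2^3)$; hence every maximal family of disjoint non-empty subsequences of $S$ that are zero-sum modulo $U$ has exactly $m+mn-2$ members, and the associated sequence of sums is a zero-sum free sequence over $C_m\oplus C_{mn}$ of the maximal length $\mathsf{D}(C_m\oplus C_{mn})-1$. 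Such sequences are rigid: by the classification of long zero-sum free sequences over groups of rank two (compare the structure statement of Theorem~\ref{Wolfgang_v} for the $\eta$-constant) they have an explicit normal form, and correspondingly $\pi(S)$ is forced into a very restricted shape over $C_2^3$ (essentially a large power of one element together with a near-Fano configuration on the remaining support). The heart of the argument is to exploit these two rigidities simultaneously, together with the freedom that remains in the choice of the blocks $T_i$ and in the distribution of the leftover elements of $S$, so as to exhibit one further non-empty zero-sum subsequence of $S$ and contradict its zero-sum freeness. Managing the interaction between the $C_2^3$-quotient $G/U$ and the rank-two subgroup $U$ is where essentially all the difficulty of the theorem lies.
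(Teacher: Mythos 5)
You should first note that the paper does not prove Theorem \ref{D} at all: it is recalled from the literature, and the discussion following its statement explains that the known proof has two parts, namely van Emde Boas's conditional argument based on a property of the set of subsums of zero-sumfree sequences of maximal length over a rank-two group (the origin of the $\nu$-invariant, see \cite{EmdeBoas69,Gao_rank3}), and the later verification of that property, which ultimately rests on Reiher's proof of Property B \cite{reiherB}. Your set-up reproduces the classical framework correctly: the lower bound via $e_1e_2^{2m-1}e_3^{2mn-1}$ is fine; the inductive bound $\mathsf{D}(G)\le \mathsf{D}_{\mathsf{D}(C_m\oplus C_{mn})}(C_2^3)$ with $\mathsf{D}(C_m\oplus C_{mn})=m+mn-1$ (Theorem \ref{ranktwo}) and $\mathsf{D}_k(C_2^3)=2k+3$ for $k\ge 2$ (cf.\ \cite{DelOrdQui01}) does give $2m+2mn+1$; and your reduction showing that a putative zero-sumfree $S$ of length $2m+2mn$ forces every maximal family of blocks that are zero-sum modulo $U$ to have exactly $m+mn-2$ members, whose sums form a zero-sumfree sequence of maximal length over $C_m\oplus C_{mn}$, is sound.

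However, the decisive step --- the improvement of the bound by one --- is not proved, and this is exactly where the substance of the theorem lies. Two things are missing. First, the structural input you invoke is not correctly identified: Theorem \ref{Wolfgang_v} classifies sequences of length $\eta(H)-1$ without short zero-sum subsequences, not zero-sumfree sequences of length $\mathsf{D}(H)-1$; the statement actually needed is the subsum/structure property of maximal zero-sumfree sequences over $C_m\oplus C_{mn}$, which is a deep theorem (it required Property B/C, completed only by Reiher) and cannot be treated as a routine ``rigidity'' fact. Second, and more importantly, even granting that structural result you never carry out the combinatorial argument over $C_2^3$ that converts the restricted shape of $\pi(S)$, the known set of subsums of the block-sum sequence, and the freedom in re-choosing the blocks and distributing the leftover elements into one additional non-empty zero-sum subsequence of $S$; your proposal explicitly defers this (``the heart of the argument \dots where essentially all the difficulty of the theorem lies''). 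For comparison, in the analogous problem for the $\eta$-constant the paper needs Lemma \ref{lem_sums} together with the Fano-type identity $V_0V_1V_2V_3=V_1'V_2'V_3'$ in the proof of Theorem \ref{eta} to close precisely this kind of gap. As it stands, your text is a correct plan up to the bound $2m+2mn+1$, but it does not constitute a proof of the stated equality.
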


The proof of the above result involved two parts. First, the claim was established conditionally on a result on the structure of the set of subsums of zero-sumfree sequences of maximal length over a group of rank two; this motivated the definition of the $\nu$-invariant (see, e.g., \cite[Definition 2.1]{GaoGero06}). Then, this property was established. The first part dates back to the very beginning of investigations of the Davenport constant (see \cite{EmdeBoas69}). The second part was only completed much later when Property B (and thus Property C) was established by Reiher \cite{reiherB} (see in particular Section 11). For further context, see for instance \cite{Gao_rank3} or \cite[Section 4.1]{Schmid11}.   

\section{New results}
\label{sec_nr}

As mentioned in the introduction we investigate zero-sum constants for groups of rank three of the form $C_2 \oplus C_{n_2} \oplus C_{n_3}$ where $2 \mid n_2 \mid n_3$. For ease of notation we will use a different parametrization, namely $C_2 \oplus C_{2m} \oplus C_{2mn}$ with $m,n \ge 1$. 

We determine $\eta(C_2 \oplus C_{2m} \oplus C_{2mn})$ for all $m,n \ge 1$, and $\mathsf{s}\left(C_2 \oplus C_{2m} \oplus C_{2mn}\right)$ under the condition that $n=1$ or $m$ has Property D. We recall that the constants were known for $m=1$, see  \cite[Theorem 1.2(1)]{FanZhong16} and \cite[Theorem 1.3]{FanGaoPengWangZhong13}; in this case even the inverse problem is solved \cite{GirardSchmid2}. 
Moreover, we determine $\mathsf{D}_k\left(C_2 \oplus C_{2m} \oplus C_{2mn}\right)$ for all $k,m,n \ge 1$; as recalled the case $k=1$ and the case $m=n=1$ were known.
We will see that there is a quite significant difference between the two cases $n=1$ and $n \neq 1$. 

Our approach to determining $\eta(C_2 \oplus C_{2m} \oplus C_{2mn})$ is similar to the one used for the Davenport constant, which we recalled above. In particular, the property of the set of restricted subsums established in Lemma \ref{lem_sums} resembles the property underlying the definition of the $\nu$-invariant. The subsequent result on $\mathsf{s}\left(C_2 \oplus C_{2m} \oplus C_{2mn}\right)$ is obtained by establishing Gao's conjecture for this group using a generalization of a well-known technique (see Lemma \ref{lem_exp-1}). The proof of the result for  $\mathsf{D}_k\left(C_2 \oplus C_{2m} \oplus C_{2mn}\right)$ also uses the result on  $\eta \left(C_2 \oplus C_{2m} \oplus C_{2mn}\right)$.      

\begin{theorem}\label{eta}
Let $m\ge 1$ and $n\ge 2$ be two integers. Then 
\[
\eta(C_2 \oplus C_{2m} \oplus C_{2m}) = 6m + 2
\]
and
\[
\eta(C_2 \oplus C_{2m} \oplus C_{2mn}) = 4m + 2mn.
\]
\end{theorem}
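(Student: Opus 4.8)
The plan is to prove both equalities by establishing the matching lower and upper bounds. Throughout I write $G$ as $G=\langle g_0\rangle\oplus H$ with $\ord(g_0)=2$ and $H\simeq C_{2m}\oplus C_{2mn}$ an index-two subgroup; the point is that $\exp(H)=\exp(G)$ and that a zero-sum subsequence of a sequence over $G$ that happens to lie in $H$ is literally a zero-sum over $H$, which makes $H$ (rather than a quotient) the right object for controlling lengths. Recall from Theorem~\ref{ranktwo}, applied with parameter $2m$ in place of $m$, that $\eta(H)=4m+2mn-2$, in particular $\eta(C_{2m}^2)=6m-2$.

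For the lower bound when $n\ge2$ I start from a short-zero-sum-free sequence $S_0$ over $H$ of maximal length $\eta(H)-1=4m+2mn-3$, which by Theorem~\ref{Wolfgang_v} may be taken to be $S_0=b_1^{2m-1}b_2^{2m-1}(b_2-b_1)^{2mn-1}$ with $\{b_1,b_2\}$ an independent generating set, $\ord(b_1)=2m$ and $\ord(b_2)=2mn$; then $S=g_0\,(g_0+b_2)\,S_0$ has length $4m+2mn-1$ and no zero-sum subsequence of length $\le2mn$, because such a subsequence can neither lie in $S_0$, nor contain exactly one of $g_0,g_0+b_2$ (its sum would leave $H$), nor contain both (after cancelling $2g_0=0$ this would require $T'\mid S_0$ with $|T'|\le2mn-2$ and $\sigma(T')=-b_2$, whereas the $b_2$-coordinate of a subsum of $S_0$ of length $\le2mn-2$ lies in $[0,2mn-2]$ and is never $\equiv-1\pmod{2mn}$). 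When $n=1$ the target length $6m+1$ exceeds $\eta(C_{2m}^2)-1$ by four, so I instead prepend four terms of the non-trivial coset to the maximal short-zero-sum-free sequence $S_0=b_1^{2m-1}b_2^{2m-1}(b_2-b_1)^{2m-1}$ over $C_{2m}^2$; a direct computation of $\Sigma_{\le2m-2}(S_0)$ and $\Sigma_{\le2m-4}(S_0)$ shows that $S=g_0\,(g_0+b_2-b_1)\,(g_0+b_1)\,(g_0+b_2)\,S_0$ works (for $m=1$ this recovers the sequence of all seven non-zero elements of $C_2^3$).

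For the upper bounds I take $S$ over $G$ of the asserted length and assume, for a contradiction, that $S$ has no zero-sum subsequence of length $\le\exp(G)$. If the image of $S$ in $G/2G\simeq\mathbb{F}_2^3$ is contained in a line I choose $H$ so that $S$ is a sequence over $H$; then $|S|\ge\eta(H)$, a contradiction. Otherwise I choose $H$ meeting both cosets, write $S=S_0S_1$ with $S_0$ over $H$ and $S_1$ over $g_0+H$, and let $S_1'$ be obtained from $S_1$ by subtracting $g_0$ from each term; since $S_0\mid S$ is short-zero-sum-free we have $|S_0|\le\eta(H)-1$, hence $\ell_1:=|S_1|\ge\eta(G)-\eta(H)+1$, which is $3$ for $n\ge2$ and $5$ for $n=1$. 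The basic mechanism is that whenever a sub-multiset $A$ of $S_1'$ of even size $|A|\le\exp(H)$ satisfies $-\sigma(A)\in\Sigma_{\le\exp(H)-|A|}(S_0)\cup\{0\}$, the corresponding terms of $S_1$ together with a short subsequence of $S_0$ form a short zero-sum of $S$; hence every pairwise sum of $S_1'$, and more generally every small even subsum of $S_1'$, must avoid the relevant restricted-subsum set of $S_0$.

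Showing that these avoidance conditions cannot all hold is where Lemma~\ref{lem_sums} is used: it asserts that for a short-zero-sum-free sequence $S_0$ over $H\simeq C_{2m}\oplus C_{2mn}$ the complement of $\Sigma_{\le\exp(H)-2}(S_0)\cup\{0\}$ is confined to a bounded union of cosets of a proper subgroup — the analogue for restricted subsums of the property underlying the $\nu$-invariant — and its proof leans on Reiher's verification of Property~B/C in the borderline configurations, exactly as in the proof of Theorem~\ref{D}. Granting it, when $\ell_1$ is small (so $|S_0|$ is near $\eta(H)-1$) I use Theorem~\ref{Wolfgang_v} to identify $\Sigma_{\le\exp(H)-2}(S_0)$ explicitly — in the independent case it is $H$ minus a single coset of $\langle b_1\rangle$ — whereupon the constraints on the few terms of $S_1'$ collapse to the impossible congruence $2y\equiv-1\pmod{2mn}$; when $\ell_1$ is large, $S_1'$ is long enough to contain an even short zero-sum of its own. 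I expect the genuinely delicate points to be: proving Lemma~\ref{lem_sums} for the non-maximal $S_0$ that occur at intermediate $\ell_1$; reconciling the small-$\ell_1$ and large-$\ell_1$ regimes; handling the non-independent alternative~(2) of Theorem~\ref{Wolfgang_v}; and carrying out the parallel but separate argument for $n=1$, where for extremal $S_0$ the complement of $\Sigma_{\le\exp(H)-2}(S_0)$ is a union of two lines through a common point rather than a single coset, so that the contradiction needs the four-term subsum conditions as well (and where the cases $m=1$ may simply be quoted from \cite{FanGaoPengWangZhong13,FanZhong16}).
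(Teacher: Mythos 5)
Your lower-bound constructions are correct (and can be verified directly, as you indicate), but the upper bound has a genuine gap, and it sits exactly at the two places you flag -- these are not routine verifications but the crux of the problem. First, Lemma~\ref{lem_sums}, and the inverse result (Theorem~\ref{Wolfgang_v}) on which any such statement rests, is only available for short-zero-sum-free sequences over $H$ of length \emph{exactly} $\eta(H)-1$. In your splitting $S=S_0S_1$ along an index-two subgroup $H\simeq C_{2m}\oplus C_{2mn}$, all you know is $|S_0|\le \eta(H)-1$; as soon as $\ell_1=|S_1|$ exceeds its minimal value, $S_0$ is a non-extremal short-zero-sum-free sequence, and for non-extremal lengths no structural description of $\Sigma_{\le \exp(H)-2}(S_0)$ is known (this inverse problem is open), so the avoidance constraints on the even subsums of $S_1'$ cannot be brought to a contradiction. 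Second, in the complementary regime your claim that $S_1'$ ``contains an even short zero-sum of its own'' is unjustified: $|S_1'|\ge\eta(H)$ only yields a short zero-sum of unspecified length, and an odd-length zero-sum subsequence of $S_1'$ lifts to a subsequence of $S$ with sum $g_0\neq 0$, hence is useless; moreover, since your $H$ has $\exp(H)=\exp(G)$, pairing up terms of $S_1$ and passing to pair-sums loses a factor of two in length (a short zero-sum among pair-sums corresponds to up to $2\exp(G)$ terms of $S$). The case $n=1$ needs in addition the unproven variant you mention for $C_{2m}^2$, where Lemma~\ref{lem_sums} as stated is explicitly false.

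For comparison, the paper avoids both obstacles by running the inductive method with $H\simeq C_m\oplus C_{mn}$ and $G/H\simeq C_2^3$, so that $\exp(H)=\exp(G)/2$: pairs of terms of $S$ whose images sum to zero in $C_2^3$ collapse to single terms of a sequence $R$ over $H$, and the count $4m+2mn=2(2m+mn-4)+\eta(C_2^3)$ produces $R$ of length exactly $\eta(H)-1$, which is precisely where Lemma~\ref{lem_sums} applies; short zero-sums of $R$ of length at most $mn$ then lift to zero-sums of $S$ of length at most $2mn$ with no parity loss, and the leftover block $T$ with $|T|\ge 6$ and distinct nonzero images in $C_2^3$ is dispatched by the identity $V_0V_1V_2V_3=V_1'V_2'V_3'$, forcing $4k'+K$ versus $3k'+K$ with $k'\notin K$. (For $n=1$ the upper bound is a one-line application of the standard inductive bound with quotient $C_2^3$.) To salvage your index-two route you would need inverse results for non-extremal lengths and a device producing even-length zero-sums, neither of which is currently available; as it stands the proposal does not constitute a proof.
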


\medskip
When $G \simeq C_2\oplus C_{2m} \oplus C_{2m}$ and $m$ is a power of $2$, $G$ is a finite abelian $2$-group such that 
\[
\mathsf{D}(G) = 2\exp(G) \text{ and } 2\mathsf{D}(G)-\exp(G) < \eta(G),
\] 
thus showing that a recent result of Luo \cite[Theorem 1.6]{Luo17} is optimal in the sense that for $2\mathsf{D}(G)-\exp(G) = \eta(G)$ to hold, the condition $\mathsf{D}(G) \le 2\exp(G)-1$ in the statement of the theorem cannot be replaced by $\mathsf{D}(G) \le 2\exp(G)$.

\begin{theorem}\label{EGZ}
Let $m\ge 1$ and $n\ge 2$ be two integers. Then 
\[
\mathsf{s}(C_2 \oplus C_{2m} \oplus C_{2m}) = 8m+1.
\]
Moreover, if $m$ has Property D, then 
\[
\mathsf{s}(C_2 \oplus C_{2m} \oplus C_{2mn}) = 4m + 4mn -1.
\]
\end{theorem}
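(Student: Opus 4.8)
By Theorem~\ref{eta} one has $\eta(C_2\oplus C_{2m}\oplus C_{2m})=6m+2$ and $\eta(C_2\oplus C_{2m}\oplus C_{2mn})=4m+2mn$ for $n\ge2$, with exponents $2m$ and $2mn$ respectively, so in both cases the claimed value of $\mathsf{s}(G)$ is precisely $\eta(G)+\exp(G)-1$. Hence the statement is equivalent to confirming Gao's conjecture (Conjecture~\ref{GaoConjecture}) for $G\simeq C_2\oplus C_{2m}\oplus C_{2mn}$: unconditionally when $n=1$, and under the hypothesis that $m$ has Property~D when $n\ge2$. Since the inequality $\mathsf{s}(G)\ge\eta(G)+\exp(G)-1$ is recalled above, the whole task is to establish $\mathsf{s}(G)\le\eta(G)+\exp(G)-1$.

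For this upper bound the plan is to run the standard ``$\eta$-to-$\mathsf{s}$'' machinery in the generalized form of Lemma~\ref{lem_exp-1}. Take a sequence $S$ over $G$ of length $\eta(G)+\exp(G)-1$ and repeatedly remove disjoint short zero-sum subsequences (each of length at most $\exp(G)$) until the leftover $R$ has length at most $\eta(G)-1$, hence admits no further short zero-sum subsequence. Each removal drops the length by at most $\exp(G)$, so the removed part $W=W_1\cdots W_k$ has length $L$ with $\exp(G)\le L\le 2\exp(G)-1$; if $L=\exp(G)$ we are done. The real content is the trimming step when $L>\exp(G)$: one must exhibit, inside $W$ together with $R$, a zero-sum subsequence of length exactly $\exp(G)$, i.e.\ recombine some of the $W_i$ with a short zero-sum subsequence of $R$ so that the total length hits $\exp(G)$ on the nose. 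Lemma~\ref{lem_exp-1} is built to carry this out once one controls the possible shapes of the obstruction: sequences of length close to $\eta(G)-1$ carrying no short zero-sum subsequence, and, after projecting modulo the $C_2$-component onto the rank-two quotient $C_{2m}\oplus C_{2mn}$, sequences of length close to $\mathsf{s}(C_{2m}\oplus C_{2mn})-1$ carrying no zero-sum subsequence of length $\exp(G)$.

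This is where the inverse results enter. For $n\ge2$ the needed structural input is supplied by Theorem~\ref{Wolfgang_v} together with Theorem~\ref{Wolfgang bis_v}; the latter applies to $C_{2m}\oplus C_{2mn}$ because $2m$ has Property~D, which holds since Property~D is multiplicative, is known for $2$, and is assumed for $m$. Feeding the explicit normal forms of these two theorems into the mechanism of Lemma~\ref{lem_exp-1} and checking that no sequence of length $\eta(G)+\exp(G)-1$ can avoid a zero-sum subsequence of length $\exp(G)$ is then careful bookkeeping. For $n=1$ Property~D of $2m$ is not available, so the structural input coming from Theorem~\ref{Wolfgang bis_v} must be replaced by a hands-on analysis, using only the unconditional inverse result Theorem~\ref{Wolfgang_v} for $\eta$ and the special arithmetic of $C_2\oplus C_{2m}\oplus C_{2m}$ — where $\exp(G)=2m$ divides the orders of two of the three cyclic factors — to perform the trimming directly; the fact that $8m+1$ exceeds the value a naive ``rank two plus $C_2$'' splitting would predict already signals that this case is genuinely different and more delicate.

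I expect the main obstacle to be exactly this trimming/recombination step. Once greedy extraction has left short zero-sums of total length strictly between $\exp(G)$ and $2\exp(G)$, one must invoke the precise normal forms of the extremal $\eta$- and $\mathsf{s}$-sequences — tracking the generating pair $\{b_1,b_2\}$, the parameters $s,t,x$, the divisor $d\mid n$, and the behaviour of the $C_2$-coordinate — to locate the missing length-$\exp(G)$ zero-sum, and organizing the ensuing case distinctions so that the argument stays finite and transparent (and making sure the $n=1$ argument does not covertly rely on Property~D) is where the effort concentrates.
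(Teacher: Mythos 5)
Your overall framing is right---the lower bound follows from Theorem~\ref{eta} together with $\mathsf{s}(G)\ge\eta(G)+\exp(G)-1$, and the upper bound amounts to Gao's conjecture for these groups---but the proposal has genuine gaps in both cases, and in fact gets the relative difficulty of the two cases backwards. For $G\simeq C_2\oplus C_{2m}\oplus C_{2m}$ no ``hands-on analysis'' is needed: taking $H\le G$ with $H\simeq C_m^2$ and $G/H\simeq C_2^3$ (so that $\exp(H)\exp(G/H)=\exp(G)$), the standard inductive bound $\mathsf{s}(G)\le(\mathsf{s}(H)-1)\exp(G/H)+\mathsf{s}(G/H)=(4m-4)\cdot2+9=8m+1$ closes this case in one line, with no Property~D and no inverse theorems. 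Conversely, the decomposition you propose for $n\ge2$, projecting modulo the $C_2$-component onto $C_{2m}\oplus C_{2mn}$, is not usable: the kernel $C_2$ and that quotient do not satisfy $\exp(\ker)\cdot\exp(G/\ker)=\exp(G)$, so the inductive method does not apply in that form, and the inverse theorems would then be invoked for the wrong rank-two group. The paper instead works with $H\simeq C_m\oplus C_{mn}$ and quotient $C_2^3$, which is why only Property~D for $m$ (not for $2m$) is assumed and why Theorems~\ref{Wolfgang_v} and~\ref{Wolfgang bis_v} are applied to $C_m\oplus C_{mn}$.

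The more serious gap is that you never explain how to produce the input that Lemma~\ref{lem_exp-1} actually requires, namely a subsequence $C\mid S$ with $jh\in\Sigma_j(C)$ for all $j\le|C|$ and $|C|\ge\lfloor(\exp(G)-1)/2\rfloor$. Generic ``greedy extraction plus trimming'' does not supply such a $C$; if it did, Gao's conjecture would follow for all groups. In the paper, $C$ is obtained as follows: one extracts $2m+2mn-4$ pairs $S_i$ with $\sigma(\pi(S_i))=0$, so that $R=\prod_i\sigma(S_i)$ is a sequence over $H\simeq C_m\oplus C_{mn}$ of length $\mathsf{s}(H)-1$ without zero-sum subsequences of length $mn$; Theorem~\ref{Wolfgang bis_v} (this is where Property~D of $m$ enters) forces some value $r$ to occur in $R$ with multiplicity at least $(n+1)m/2-1$, and one must then show that at least one pair $S_i$ with $\sigma(S_i)=r$ has the form $h_i^2$, so that $C=Q_r=\prod_{\sigma(S_i)=r}S_i$ works with $h=h_i$. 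Establishing this last point is the heart of the proof and needs two further ingredients absent from your sketch: the stability Lemma~\ref{stability_s}, used via the auxiliary fact that $\pi(g)=\pi(h)$ with $g\mid T$ and $h\mid S$ forces $g=h$, and a swapping argument comparing the decompositions $s_1s_2,\,s_1's_2'$ with $s_1s_1',\,s_2s_2'$ and with $s_1s_2',\,s_2s_1'$. Without these, the ``trimming/recombination step'' you flag as the main obstacle remains exactly that---an obstacle, not a proof.
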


In combination, the two results imply that Gao's Conjecture \ref{GaoConjecture} holds true for these types of groups.  

\begin{corollary}
Let $m, n \ge 1$ be two integers. 
If $n=1$ or $m$ has Property D, then Conjecture \ref{GaoConjecture} holds true for $C_2 \oplus C_{2m} \oplus C_{2mn}$.
\end{corollary}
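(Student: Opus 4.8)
The plan is to obtain the corollary as an immediate consequence of Theorems \ref{eta} and \ref{EGZ}, by matching their closed-form values against the identity to be proved. Write $G = C_2 \oplus C_{2m} \oplus C_{2mn}$. Recall that $\mathsf{s}(G) \ge \eta(G) + \exp(G) - 1$ holds for every finite abelian group, so only the reverse inequality is genuinely at issue; but since Theorems \ref{eta} and \ref{EGZ} provide exact values of $\eta(G)$ and $\mathsf{s}(G)$ in the relevant ranges, it suffices to verify the equality $\mathsf{s}(G) = \eta(G) + \exp(G) - 1$ by direct computation. I would split into the two cases that are already built into those theorems.

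If $n = 1$, then $G \simeq C_2 \oplus C_{2m} \oplus C_{2m}$ and $\exp(G) = 2m$; Theorem \ref{eta} gives $\eta(G) = 6m + 2$ and Theorem \ref{EGZ} gives $\mathsf{s}(G) = 8m + 1$, and indeed $(6m+2) + 2m - 1 = 8m + 1$. Here $m$ is arbitrary, so Property D plays no role. If instead $n \ge 2$ and $m$ has Property D, then $\exp(G) = 2mn$; Theorem \ref{eta} gives $\eta(G) = 4m + 2mn$, while the second part of Theorem \ref{EGZ} --- which is precisely where the Property D hypothesis enters --- gives $\mathsf{s}(G) = 4m + 4mn - 1$, and $(4m + 2mn) + 2mn - 1 = 4m + 4mn - 1$. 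Since $m = 1$ has Property D by convention, these two cases together cover every pair $(m,n)$ with $n = 1$ or $m$ having Property D, which is exactly the asserted range.

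I do not expect any real obstacle in this step: all of the difficulty has already been discharged in Theorems \ref{eta} and \ref{EGZ}. The only point requiring care is the bookkeeping --- making sure the case split of the corollary aligns with the hypotheses of the two cited theorems, so that the Property D assumption is invoked only for the $n \ge 2$ value of $\mathsf{s}(G)$ and nowhere else, and that the $n = 1$ statements are applied with $m$ unrestricted.
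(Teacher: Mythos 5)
Your proposal is correct and is exactly the argument the paper intends: the corollary is presented as an immediate consequence of Theorems \ref{eta} and \ref{EGZ}, verified by the same two arithmetic checks you perform ($6m+2+2m-1=8m+1$ for $n=1$, and $4m+2mn+2mn-1=4m+4mn-1$ for $n\ge 2$ with Property D). No gaps.
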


We end this section with our result on the multiwise Davenport constants. 

\begin{theorem}\label{D_k}
Let $G \simeq C_2 \oplus C_{2m} \oplus C_{2mn},$ where $m,n \ge 1$ are integers.
If $n \ge 2$, then $\mathsf{D}_0(G)=2m$ and $k_{\mathsf{D}}(G)=1$. 
If $n=1$, then $\mathsf{D}_0(G)=2m+1$ and $k_{\mathsf{D}}(G)=2$. 
\end{theorem}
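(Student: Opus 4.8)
The plan is to determine $\mathsf{D}_k(G)$ for every $k\ge 1$ and then read off $\mathsf{D}_0(G)$ and $k_{\mathsf{D}}(G)$. Concretely, I aim to prove that $\mathsf{D}_k(G)=2m+2mnk$ for all $k\ge 1$ when $n\ge 2$, and that, when $n=1$, one has $\mathsf{D}_1(G)=\mathsf{D}(G)=4m$ while $\mathsf{D}_k(G)=2m+1+2mk$ for all $k\ge 2$. Granting this, the bookkeeping is immediate: for $n\ge 2$ the sequence $\bigl(\mathsf{D}_k(G)\bigr)_{k\ge 1}$ is the arithmetic progression $2m+k\exp(G)$, whence $\mathsf{D}_0(G)=2m$ and $k_{\mathsf{D}}(G)=1$; for $n=1$ it coincides with the arithmetic progression $(2m+1)+k\exp(G)$ from $k=2$ on, but its value $4m$ at $k=1$ differs from $4m+1$, whence $\mathsf{D}_0(G)=2m+1$ and $k_{\mathsf{D}}(G)=2$.

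For the lower bounds I use explicit extremal sequences. Fix a basis $(e_1,e_2,e_3)$ of $G$ with $\ord(e_1)=2$, $\ord(e_2)=2m$ and $\ord(e_3)=2mn$. The sequence $S=e_1 e_2^{2m-1} e_3^{2mnk-1}$ has length $2m+2mnk-1$, and since $G=\langle e_1\rangle\oplus\langle e_2\rangle\oplus\langle e_3\rangle$ its only nonempty zero-sum subsequences are the powers $e_3^{2mnj}$ with $1\le j\le k-1$; hence $S$ admits at most $k-1$ pairwise disjoint nonempty zero-sum subsequences, so that $\mathsf{D}_k(G)\ge 2m+2mnk$ for every $n\ge 1$. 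When $n=1$ and $k\ge 2$ this lower bound must be sharpened by one: here one first produces a sequence $B$ over $G$ of length $6m$ admitting no two disjoint nonempty zero-sum subsequences, and then forms $S=B\,e_3^{2m(k-2)}$, which has length $2m(k+1)=\mathsf{D}_k(G)-1$ and, by a subsum analysis that accounts for the interaction of $B$ with the appended block, again at most $k-1$ pairwise disjoint nonempty zero-sum subsequences. For $m=1$ one may take $B$ to be the product of six of the seven nonzero elements of $G\simeq C_2^3$: it has no zero-sum subsequence of length at most $2$, and two disjoint zero-sum subsequences of $B$ would amount to two disjoint lines of the Fano plane lying inside these six points, which is impossible since any two lines meet. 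The general $B$ is the analogue of this configuration.

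For the upper bounds I induct on $k$, the engine being Theorem \ref{eta}: if $|S|\ge\eta(G)$, then $S$ contains a nonempty zero-sum subsequence of length at most $\exp(G)$, and deleting it trades one unit of $k$ against $\exp(G)$ units of length. This yields, for $k\ge 2$, the recursion $\mathsf{D}_k(G)\le\max\bigl\{\eta(G),\ \mathsf{D}_{k-1}(G)+\exp(G)\bigr\}$, whose base value $\mathsf{D}_1(G)=\mathsf{D}(G)$ is supplied by Theorem \ref{D}. For $n\ge 2$ one checks that $2m+2mnk\ge\eta(G)=4m+2mn$ as soon as $k\ge 2$, so the maximum is always the second term, the recursion closes, and $\mathsf{D}_k(G)\le 2m+2mnk$, matching the lower bound. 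For $n=1$ the same check works for $k\ge 3$, where $2m+1+2mk\ge\eta(G)=6m+2$, so the entire $n=1$ case reduces to the single inequality $\mathsf{D}_2\bigl(C_2\oplus C_{2m}\oplus C_{2m}\bigr)\le 6m+1$; note that the recursion by itself only gives $\mathsf{D}_2(G)\le\eta(G)=6m+2$.

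Closing this last gap is where I expect the real work to lie. Given $S$ with $|S|=6m+1$, it suffices to exhibit a nonempty zero-sum subsequence $T$ of $S$ with $|T|\le\exp(G)+1=2m+1$: then $|ST^{-1}|\ge 4m=\mathsf{D}(G)$, so $ST^{-1}$ contains a nonempty zero-sum subsequence, necessarily disjoint from $T$, and $S$ has the two disjoint zero-sum subsequences we want. Thus the task is a one-notch strengthening of Theorem \ref{eta}, namely that \emph{every sequence over $G$ of length $\eta(G)-1$ contains a nonempty zero-sum subsequence of length at most $\exp(G)+1$}. I would obtain this either from an explicit description of the sequences of length $\eta(G)-1$ over $G$ having no short zero-sum subsequence --- such a description, in the spirit of Theorem \ref{Wolfgang_v}, ought to emerge from the analysis behind Theorem \ref{eta} --- by checking that each such extremal sequence still contains a zero-sum subsequence of length exactly $\exp(G)+1$; or from revisiting the inductive proof of $\eta(G)=6m+2$ so as to extract directly a zero-sum subsequence of length at most $\exp(G)+1$. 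The case $m=1$ is the model: the only sequence over $C_2^3$ of length $\eta(C_2^3)-1=7$ with no zero-sum subsequence of length at most $2$ is the product of all seven nonzero elements, and it contains a line, i.e.\ a zero-sum subsequence of length $3=\exp(C_2^3)+1$.
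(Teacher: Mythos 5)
Your treatment of the case $n\ge 2$ is correct and is in substance the paper's argument: the paper simply cites \cite[Theorem 6.1.5(1)]{GeroKoch06}, which encapsulates exactly your recursion $\mathsf{D}_k(G)\le\max\{\eta(G),\mathsf{D}_{k-1}(G)+\exp(G)\}$ together with the standard lower bound, fed with Theorems \ref{D} and \ref{eta}; your explicit sequence $e_1e_2^{2m-1}e_3^{2mnk-1}$ and the check $2m+2mnk\ge\eta(G)$ for $k\ge 2$ are fine. The problems are in the case $n=1$, on both sides.

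For the lower bound with $n=1$, $k\ge 2$, you never produce the sequence $B$ of length $6m$ over $C_2\oplus C_{2m}\oplus C_{2m}$ with no two disjoint non-empty zero-sum subsequences for $m\ge 2$; ``the analogue of the six points of the Fano plane'' is not a construction, and the object the paper actually uses, namely $(e_2+e_3)^{2m-1}e_2^{2m-1}e_3^{2m-2}e_1(e_1+e_2+e_3)(e_1+e_2)(e_1+e_3)$, is not an obvious analogue (note the multiplicities $2m-1,2m-1,2m-2$). Moreover, your propagation $S=B\,e_3^{2m(k-2)}$ is not automatic: disjoint zero-sum subsequences of $S$ may mix elements of $B$ with the appended block (their $B$-parts need only have sums in $\langle e_3\rangle$, not sum zero), so the count of disjoint zero-sums does not follow from the property of $B$ alone but depends on its structure, e.g.\ on $\mathsf{v}_{e_3}(B)$; the paper handles this by taking $S_k=S_2\,(e_2+e_3)^{2(k-2)m}$ for its specific $S_2$ and running a seven-case analysis of the minimal zero-sum subsequences. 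For the upper bound with $n=1$, your reduction to $\mathsf{D}_2(C_2\oplus C_{2m}\oplus C_{2m})\le 6m+1$ is sound, but the step you rely on --- that every sequence of length $\eta(G)-1=6m+1$ contains a non-empty zero-sum subsequence of length at most $\exp(G)+1=2m+1$ --- is exactly the missing content. It amounts to inverse-problem information about $\eta(C_2\oplus C_{2m}\oplus C_{2m})$, which the paper does not establish: its proof of Theorem \ref{eta} in the case $n=1$ is a one-line application of \cite[Proposition 5.7.11]{GeroKoch06} and yields no structural description of the extremal sequences, so nothing of the kind ``ought to emerge'' from it. The paper avoids this entirely by using the inductive bound $\mathsf{D}_k(G)\le\mathsf{D}_{\mathsf{D}_k(H)}(G/H)$ from \cite[Proposition 2.6]{DelOrdQui01} with $H\simeq C_m^2$ and $G/H\simeq C_2^3$, together with $\mathsf{D}_k(C_m^2)=(k+1)m-1$ and $\mathsf{D}_j(C_2^3)=3+2j$ for $j\ge 2$, which gives $2m+1+2mk$ directly for all $k\ge 2$. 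As it stands, both halves of your $n=1$ argument are programmatic sketches rather than proofs.
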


Note that the case $n=1$ extends to all $m \ge 1$ a result of Delorme, Ordaz and Quiroz \cite[Lemma $3.7$]{DelOrdQui01} stating that $\mathsf{D}_0(C^3_2)=3$ and $k_{\mathsf{D}}(C^3_2)=2$.

\medskip
When $G \simeq C_2\oplus C_{2m} \oplus C_{2m}$, where $m \ge 1$, we have
\[
\mathsf{D}(G) = \mathsf{D}(C_2 \oplus C_{2m}) + (2m-1) \ \text{ and } \ \eta(G) > \mathsf{D}(G) + 2m,
\]
however $k_{\mathsf{D}}(G) \ge 2$, 
thus showing that \cite[Remark 5.3.2]{FreezeSchmid10} is nearly optimal in the sense that for $k_{\mathsf{D}}(G)=1$ to hold, the condition $\eta(G) \le \mathsf{D}(G) + \exp(G)$ stated in this remark cannot be replaced by a much weaker inequality.

\section{Auxiliary results}

In this section we establish several auxiliary results. In some cases, we prove results which are slightly more general than what is needed for our immediate purpose, but we mostly focus on the needs of the present paper.

Our first lemma shows that extremal sequences with respect to the Erd\H{o}s--Ginzburg--Ziv constant for groups of rank at most two are stable in the sense that changing a unique element cannot yield another extremal sequence. It could be interesting to consider this problem for more general groups, and to determine for specifc groups the exact number of elements one has to change to get another extremal example.  

\begin{lemma}
\label{stability_s} 
Let $H \simeq C_{m} \oplus C_{mn}$, where $m,n \ge 1$ are integers and $m $ satisfies Property D. 
Let  $S_1,S_2$ be sequences over $H$ of length $\mathsf{s}(H)-1$ not containing any zero-sum subsequence of length $\exp(H)$. 
If $|\gcd(S_1, S_2)| \ge \mathsf{s}(H)-2$, then $S_1 = S_2$. 
\end{lemma}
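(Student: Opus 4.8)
The plan is to use the explicit classification of extremal sequences provided by Theorem \ref{Wolfgang bis_v}. Since $m$ has Property D, both $S_1$ and $S_2$ are of the form
\[
S_i = c_i^{t_i m-1} (b_1^{(i)}+c_i)^{(n+1-t_i)m-1}(b_2^{(i)}+c_i)^{s_i m-1} \bigl(-x_i b_1^{(i)} + b_2^{(i)}+c_i \bigr)^{(n+1-s_i)m-1}
\]
for appropriate data $c_i, b_1^{(i)}, b_2^{(i)}, s_i, t_i, x_i$ as in that theorem. The hypothesis $|\gcd(S_1,S_2)| \ge \mathsf{s}(H)-2 = |S_i|-1$ means that $S_1$ and $S_2$ agree except possibly in a single element: there are sequences $T$, and elements $g_1, g_2 \in H$, with $S_i = T g_i$ and $|T| = \mathsf{s}(H)-2$. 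The goal is to show $g_1 = g_2$.

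First I would record that each $S_i$ has support of size at most $4$, with the four listed multiplicities summing to $|S_i| = 4m+4mn-3$, and that each of the four exponents is congruent to $-1$ modulo $m$ and lies between $m-1$ and $(n+1)m-1 \le 2mn+m-1$ (using $s_i,t_i\in[1,n]$, with the degenerate possibilities when some exponent is $m-1$, i.e.\ the corresponding block is absent, handled separately). The key combinatorial observation is that the multiset of multiplicities $\{t_i m - 1,\ (n+1-t_i)m-1,\ s_i m - 1,\ (n+1-s_i)m-1\}$ is essentially rigid: since $S_1$ and $S_2$ differ by moving one single element, the multiplicity function of $S_1$ and of $S_2$ differ by $1$ in at most two places (the location of $g_1$ decreases by one, the location of $g_2$ increases by one, relative to $T$). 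But every nonzero multiplicity appearing in $S_i$ is $\equiv -1 \pmod m$, so for $m \ge 2$ a multiplicity of $S_1$ that equals some nonzero multiplicity of $S_2$ plus or minus $1$ forces both to be the zero multiplicity unless $m=1$; hence for $m \ge 2$ the supports and all the multiplicities must actually coincide, i.e.\ $\supp(S_1) = \supp(S_2)$ and $g_1 = g_2$. The case $m = 1$ (where $H$ is cyclic, $\mathsf{s}(H)-1 = 2n-2$, and the extremal sequences are $c^{n-1}(c+b)^{n-1}$) I would treat directly: an extremal sequence over $C_n$ has support of size $\le 2$ with both multiplicities $n-1$, and changing one element of $c^{n-1}(c+b)^{n-1}$ to obtain another such sequence is readily checked to be impossible for $n\ge 2$, while $n=1$ is trivial; the case $m=n=1$ is trivial since $\mathsf{s}(H)-1 = 0$.

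So the substantive case is $m \ge 2$, where we now know $S_1$ and $S_2$ have the same support and the same multiplicity function — that is, $S_1 = S_2$ outright. Wait: in fact the argument above, once it shows the multiplicity functions of $S_1$ and $S_2$ agree, already gives $S_1 = S_2$, since a sequence is determined by its multiplicity function. The main obstacle is the bookkeeping when one or more of the four blocks in the canonical form is degenerate (multiplicity $m-1$, so effectively absent) or when two of the four base elements $c_i$, $b_1^{(i)}+c_i$, $b_2^{(i)}+c_i$, $-x_ib_1^{(i)}+b_2^{(i)}+c_i$ coincide, so that the actual support of $S_i$ has fewer than four elements and the multiplicities add up: one must check that even then, every multiplicity occurring is still $\equiv -1$ or $\equiv -2 \pmod m$, and that for $m\ge 3$ these residue classes are disjoint from their $\pm 1$-neighbours among admissible values, while for $m=2$ one argues more carefully using the precise coincidence patterns allowed by Theorem \ref{Wolfgang bis_v}(1)–(2) together with $\mathsf{D}(H) = 2m+2mn$ type length constraints. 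I expect the residue-class argument to dispatch essentially all configurations, leaving only a short finite check for small $m$.
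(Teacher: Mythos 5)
Your residue-class argument is exactly the paper's argument for $m\ge 3$ (and for $m=1$, $n\ge 3$): if $g_1\neq g_2$ then $\mathsf{v}_{g_1}(S_2)=\mathsf{v}_{g_1}(S_1)-1\equiv m-2\pmod m$, which is neither $0$ nor $\equiv m-1$, a contradiction. Two remarks on that part. First, the deferred check that no two of the four base points $c$, $b_1+c$, $b_2+c$, $-xb_1+b_2+c$ coincide is genuinely needed (a coincidence would create multiplicities $\equiv -2\pmod m$ and wreck the parity of the argument), but it is a one-line verification: any coincidence forces $H$ to be generated by a single element, impossible for $m\ge 2$. Second, a block with exponent $m-1$ is \emph{not} absent for $m\ge2$; only for $m=1$ can an exponent vanish.

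The genuine gap is $m=2$, and it is not a ``short finite check'': $n$ is unbounded there. For $m=2$ every nonzero multiplicity is odd, so removing one copy of $g_1$ yields an even multiplicity, which your argument can only force to be $0$; the configuration $\mathsf{v}_{g_1}(S_1)=1$, $g_1\notin\supp(S_2)$, $\mathsf{v}_{g_2}(S_2)=1$, $g_2\notin\supp(S_1)$ survives the residue test, and multiplicity-$1$ blocks really occur (take $s=1$ or $t=1$ in the normal form). Neither ``coincidence patterns'' nor a Davenport-constant bound (note $\mathsf{D}(H)=m+mn-1$ here, not $2m+2mn$, which is the rank-three constant) closes this. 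The paper's fix is a different observation: for $m=2$ one computes directly from the normal form that $\sigma(S)=0$ for \emph{every} extremal sequence (the $b_1$-contributions $-(2t-1)b_1$ and $(2s-1)b_1$ cancel because either $\ord(b_1)=2$ in the independent case or $s=t=n$ in the other), whence $g_1=\sigma(S_1)-\sigma(T)=\sigma(S_2)-\sigma(T)=g_2$. Without this (or an equivalent) additional idea your proof does not go through for $m=2$.
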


\begin{proof}
Let $T=\gcd(S_1, S_2)$. By assumption, we have $S_1 = g_1 T$ and $S_2 = g_2 T$ with $g_1, g_2 \in G$.

\medskip
First, assume that $m \ge 3$. We know by Theorem \ref{Wolfgang bis_v} that $\mathsf{v}_g(S_i) \equiv m-1 \pmod{m}$ for each $g \in \supp(S_i)$. 
It thus follows that  $\mathsf{v}_{g_1}(T) \equiv m-2 \pmod{m}$. Since this is non-zero it follows that $g_1 \in \supp(S_2)$, and thus 
$  \mathsf{v}_{g_1}(S_2) \equiv m-1 \pmod{m}$. Yet,  $\mathsf{v}_{g_1}(S_2) = \mathsf{v}_{g_1}(g_2) + \mathsf{v}_{g_1}(T)$. Since $\mathsf{v}_{g_1}(T) \equiv m-2 \pmod{m}$ we must have  $\mathsf{v}_{g_1}(g_2) \neq 0$, that is, $g_1 = g_2$. 
This proves the claim in the case $m \ge 3$. 

\medskip
Now, assume that $m = 2$. 
By Theorem \ref{Wolfgang bis_v}, every sequence $S$ over $H$ containing no zero-sum subsequence of length $2n$ can be decomposed as 
\[
S = c^{2t-1} (b_1+c)^{2(n+1 -t)-1}(b_2+c)^{2s-1} \left(-b_1 + b_2+c \right)^{2(n+1-s)-1},
\]
where $c \in H$, $\{b_1,b_2\}$ is a generating set of $H$ with $\ord(b_2) = 2n$ and $s,t \in [1,n]$, such that either $\{b_1,b_2\}$ is an independent generating set of $H$ or $s=t= n$. 
In particular, one has
\[
\sigma\left(c^{2t-1} (b_1+c)^{2(n+1 -t)-1}\right) = -(2t-1)b_1,
\]
and 
\[
\sigma\left((b_2+c)^{2s-1} \left(-b_1 + b_2+c \right)^{2(n+1-s)-1}\right) = (2s-1)b_1.
\]
Therefore, either $\{b_1,b_2\}$ is an independent generating set of $H$, in which case $\ord(b_1)=2$ (see the comments after Theorem \ref{Wolfgang bis_v} in Section \ref{sec_prel}) so that $\sigma(S)=b_1-b_1=0$, or $s=t=n$, in which case $\sigma(S)=b_1-b_1=0$ also.
It thus follows in both cases that $g_1+\sigma(T) = \sigma(S_1) = 0 = \sigma(S_2) = g_2 + \sigma(T)$, which yields $g_1=g_2$ indeed.

Finally, let us consider the case $m=1$. Then, we know by the results recalled before  Theorem \ref{Wolfgang_v} that $\mathsf{v}_g(S_i) \equiv n-1 \pmod{n}$ for each $g \in \supp(S_i)$. For $n \ge 3$ we can argue as in the case $m \ge 3$. For $n\le 2$, that is for $C_1$ and $C_2$, the claim is trivial as there is only one sequence of length $\mathsf{s}(H)-1$ not containing any zero-sum subsequence of length $\exp(H)$.
\end{proof}

The analogous result for the $\eta$-constant holds true as well; we record it for its own sake. 

\begin{lemma}
\label{stability_eta} 
Let $H \simeq C_{m} \oplus C_{mn}$ where $m,n \ge 1$ are integers. 
Let  $S_1,S_2$ be sequences over $H$ of length $\eta(H)-1$ not containing any short zero-sum subsequence. 
If $|\gcd(S_1, S_2)| \ge \eta(H)-2$, then $S_1 = S_2$. 
\end{lemma}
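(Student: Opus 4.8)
The plan is to mimic the structure of the proof of Lemma \ref{stability_s}, but now relying on the structural description of extremal sequences for the $\eta$-constant given in Theorem \ref{Wolfgang_v} rather than the one for $\mathsf{s}$. As before, write $T = \gcd(S_1, S_2)$, so that the hypothesis $|\gcd(S_1,S_2)| \ge \eta(H)-2 = |S_i| - 1$ forces $S_1 = g_1 T$ and $S_2 = g_2 T$ for some $g_1, g_2 \in H$; the goal is to show $g_1 = g_2$. Note that the case $m = 1$ is exactly the cyclic case $C_n$, where the unique extremal sequence is $b^{n-1}$, so there is nothing to prove (or one argues via the congruence $\mathsf{v}_g(S_i) \equiv n-1 \pmod n$ exactly as in the proof of Lemma \ref{stability_s}); hence I would assume $m \ge 2$ throughout.

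First I would handle $m \ge 3$ by the multiplicity-congruence argument. By Theorem \ref{Wolfgang_v}, every $g \in \supp(S_i)$ satisfies $\mathsf{v}_g(S_i) \equiv m-1 \pmod m$ (the three exponents appearing are $m-1$, $sm-1$, and $(n+1-s)m-1$, all $\equiv -1 \pmod m$). Then $\mathsf{v}_{g_1}(T) = \mathsf{v}_{g_1}(S_1) - 1 \equiv m-2 \pmod m$, which is nonzero since $m \ge 3$; hence $g_1 \in \supp(T) \subseteq \supp(S_2)$, so $\mathsf{v}_{g_1}(S_2) \equiv m-1 \pmod m$. But $\mathsf{v}_{g_1}(S_2) = \mathsf{v}_{g_1}(g_2) + \mathsf{v}_{g_1}(T)$ with $\mathsf{v}_{g_1}(T) \equiv m-2$, forcing $\mathsf{v}_{g_1}(g_2) \ne 0$, i.e. $g_1 = g_2$. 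This settles $m \ge 3$.

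The remaining case $m = 2$ is where I expect the real work, since the congruence $\mathsf{v}_g(S_i) \equiv 1 \pmod 2$ only tells us the support is exactly three points (with odd multiplicities summing to $|S_i|$), and $\mathsf{v}_{g_1}(T) \equiv 0 \pmod 2$ gives no immediate contradiction. Here I would instead use the \emph{sum}: by Theorem \ref{Wolfgang_v} with $m = 2$,
\[
S = b_1 \, b_2^{\,2s-1} \, (-xb_1 + b_2)^{\,2(n+1-s)-1},
\]
where $\gcd(x,2) = 1$ forces $x = 1$, and $\{b_1, b_2\}$ is a generating set with $\ord(b_2) = 2n$ that is either independent (whence $\ord(b_1) = 2$) or else $s = n$. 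Computing the sum, $\sigma(S) = b_1 + (2s-1)b_2 + (2n+1-2s)(-b_1+b_2) = (1 - 2n - 1 + 2s)b_1 + (2s - 1 + 2n + 1 - 2s)b_2 = (2s - 2n)b_1 + 2n b_2 = -2(n-s)b_1$, using $\ord(b_2) = 2n$. In the independent case $\ord(b_1) = 2$ so this is $0$; in the case $s = n$ it is likewise $0$. In either case $\sigma(S_i) = 0$, so $g_1 + \sigma(T) = \sigma(S_1) = 0 = \sigma(S_2) = g_2 + \sigma(T)$, giving $g_1 = g_2$.

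The main obstacle is thus the $m = 2$ case, and specifically making sure that the two alternatives in Theorem \ref{Wolfgang_v} really do both yield $\sigma(S) = 0$ — the independence alternative needs the recalled fact that an independent generating set with $\ord(b_2) = mn$ forces $\ord(b_1) = m$, here $= 2$, killing the $b_1$-coefficient, while the other alternative ($s = n$, $x = 1$) kills it directly. Apart from verifying this zero-sum property of the extremal sequences (which is a short computation using $\ord(b_2) = 2n$), the argument is a routine transcription of the proof of Lemma \ref{stability_s}.
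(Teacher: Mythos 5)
Your proof is correct, and for $m\ge 3$ and $m=1$ it coincides with the paper's argument (the multiplicity-congruence argument, respectively the uniqueness of $b^{n-1}$ in the cyclic case). The one place where you diverge is the $m=2$ case: the paper does not recompute anything there, but instead pads both sequences with $0^{\exp(H)-1}$, observing that $S_i'=S_i0^{\exp(H)-1}$ then has length $\mathsf{s}(H)-1$ (by Theorem \ref{ranktwo}), contains no zero-sum subsequence of length $\exp(H)$, and satisfies $|\gcd(S_1',S_2')|\ge \mathsf{s}(H)-2$, so that Lemma \ref{stability_s} (applicable since $m=2$ has Property D) gives $S_1'=S_2'$ and hence $S_1=S_2$. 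You instead compute $\sigma(S)$ directly from the description in Theorem \ref{Wolfgang_v}, getting $\sigma(S)=-2(n-s)b_1$, which vanishes both when $\{b_1,b_2\}$ is independent (since then $\ord(b_1)=2$) and when $s=n$; your computation is right, and it is exactly the same mechanism the paper uses inside the $m=2$ case of Lemma \ref{stability_s}, just applied to the $\eta$-extremal sequences rather than imported via the padding trick. Your route is self-contained modulo Theorem \ref{Wolfgang_v} and avoids invoking Property D for $m=2$; the paper's route is shorter because it reuses the already-proved lemma. Both are valid, so there is nothing to fix.
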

\begin{proof}
For $m \ge 3$, the same argument as in Lemma \ref{stability_s} works. 
For $m=2$, let $S_1,S_2$ be two sequences over $H$ of length $\eta(H)-1$ not containing any short zero-sum subsequence and such that $|\gcd(S_1, S_2)| \ge \eta(H)-2$.
Since $S_1' = S_10^{\exp(H)-1}$ and $S_2' = S_20^{\exp(H)-1}$ are two sequences over $H$ of length $\mathsf{s}(H)-1$ not containing any zero-sum subsequence of length $\exp(H)$ and such that $|\gcd(S_1, S_2)| \ge \mathsf{s}(H)-2$, Lemma \ref{stability_s} applies and yields $S'_1=S'_2$, that is to say $S_1=S_2$. 
Finally, for $m=1$ and $n \ge 3$ the claim follows from the fact that a sequence 
 over $H$ of length $\eta(H)-1$ not containing any short zero-sum subsequence is of the form $h^{n-1}$ for some generating element of $h \in  H$ (recall the results  before  Theorem \ref{Wolfgang_v}). The case $m=1$ and $n \le 2$ is trivial.
\end{proof}

Next we obtain results on the set of restricted subsums of sequences that are extremal examples with respect to the $\eta$-constant and the Erd\H{o}s--Ginzburg--Ziv constant. The result we obtain is reminiscent of the condition in the definition of the $\nu$-invariant (see, e.g., \cite{Gao_rank3}).

\begin{lemma}
\label{lem_sums}
Let $H \simeq C_{m} \oplus C_{mn}$ where $m,n$ are positive integers and $n \ge 2$.
\begin{enumerate}
\item Let $S$ be a sequence over $H$ of length $|S|= \eta(H)-1$ not containing any short zero-sum subsequence. 
Then $\Sigma_{\le mn - 2} (S) \supseteq H \setminus ((-k' + K)  \cup \{0\} )$ for a proper subgroup $K$ and some $k' \notin K$. 
\item Suppose that $m$ has Property D. Let $S$ be a sequence over $H$ of length $|S|= \mathsf{s}(H)-1$ not containing any zero-sum subsequence of length $\exp(H)$. Then $\Sigma_{mn - 2} (S) \supseteq H \setminus (- k' + K)$ for a proper subgroup $K$ and some $k' \notin K$. 
\end{enumerate}
\end{lemma}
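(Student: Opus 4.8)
The plan is to exploit the explicit structure of extremal sequences provided by Theorem \ref{Wolfgang_v} (for part (1)) and Theorem \ref{Wolfgang bis_v} (for part (2)), and then to read off which elements of $H$ arise as restricted subsums of the prescribed length. Write $S = b_1^{m-1} b_2^{sm-1} (-xb_1+b_2)^{(n+1-s)m-1}$ as in Theorem \ref{Wolfgang_v}, with $\{b_1,b_2\}$ a generating set, $\ord(b_2)=mn$, $s\in[1,n]$, $\gcd(x,m)=1$, and either $\{b_1,b_2\}$ independent or $s=n,x=1$. Every $h\in H$ can be written $h = a_1 b_1 + a_2 b_2$ with $a_1\in[0,m-1]$, $a_2\in[0,mn-1]$, as recalled after Theorem \ref{Wolfgang bis_v}. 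The natural candidate for the bad coset is $K = \langle b_1 \rangle$ (a proper subgroup since $\ord(b_2)=mn$ and $n\ge2$) together with a suitable translate; I expect $k'$ to be the class $(n+1-s)m \cdot b_2$ or a small multiple of $b_2$ coming from the ``last coset'' of $S$ that one cannot fully exhaust.

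First I would fix a target $h = a_1 b_1 + a_2 b_2$ and try to build a subsequence $T\mid S$ of length exactly $mn-2$ with $\sigma(T)=h$. The idea is a two-step selection: use $j$ copies of $b_2$ and $j'$ copies of $-xb_1+b_2$ (so the ``$b_2$-content'' is $j+j'$ and the ``$b_1$-content'' is $-j'x$), then top up with $i$ copies of $b_1$ from the $b_1^{m-1}$ block. Since $\gcd(x,m)=1$, for any desired residue of $a_1$ mod $m$ we can choose $j' \in [0,m-1]$ with $-j'x \equiv a_1 - i \pmod m$ for a suitable $i\in[0,m-1]$; the constraint is that we have only $m-1$ copies of $b_1$ and the total length must be $mn-2$, i.e. $i+j+j' = mn-2$. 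Because the multiplicities $sm-1$ and $(n+1-s)m-1$ of the two $b_2$-type elements are large (their sum is $(n+1)m-2 \ge mn-2$ whenever $n\ge 2$... here I must be careful: actually $sm-1+(n+1-s)m-1 = (n+1)m-2$, and $(n+1)m-2 \ge mn-2 \iff m\ge 0$, fine), we have enough room, and the $b_2$-coordinate $j+j'$ can be made to realize any value in a long interval modulo $mn$. The elements we provably \emph{cannot} reach are those forcing $a_1$ to take the one residue class that is unavailable once length and block-size constraints bind — this is precisely a translate $-k'+K$ of $K=\langle b_1\rangle$ — together with $0$ itself (a length-$(mn-2)$ zero-sum subsequence would, combined with the complement, be excluded or would contradict extremality — here one uses that $|S| = \eta(H)-1 = 2m+mn-2$, so the complement has length $2m$, and a short zero-sum subsequence would be produced).

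For part (2) the argument is parallel but cleaner: with $S = c^{tm-1}(b_1+c)^{(n+1-t)m-1}(b_2+c)^{sm-1}(-xb_1+b_2+c)^{(n+1-s)m-1}$ from Theorem \ref{Wolfgang bis_v}, a subsequence of length exactly $mn-2$ has sum $(mn-2)c + (\text{a }\langle b_1\rangle\text{-and-}b_2\text{ combination})$, and translating by $(mn-2)c$ reduces to the same counting problem; here one does not need to exclude $0$ separately because we fix the length to be exactly $mn-2 = \exp(H)-2$ rather than ``at most'', and a zero-sum subsequence of that length is not directly forbidden, so the conclusion is $\Sigma_{mn-2}(S)\supseteq H\setminus(-k'+K)$ with no extra removed point. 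The main obstacle I anticipate is bookkeeping: verifying that the length constraint $i+j+j'=mn-2$ together with the per-block caps $i\le m-1$, $j\le sm-1$, $j'\le (n+1-s)m-1$ genuinely allows every coset of $K$ except one, and identifying that exceptional coset uniformly across the two cases (a) and (b) of the structure theorems — in case (b), where $\{b_1,b_2\}$ need not be independent and $mb_1 = \ell m(n/d)b_2$, one must check the relation does not accidentally enlarge the reachable set, or if it does, that the statement (which only asserts a lower bound on $\Sigma$) still holds. I would organize the calculation by first treating the independent case, then noting that in case (b) the extra relations only help.
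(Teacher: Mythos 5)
Your high-level plan (invoke Theorems \ref{Wolfgang_v} and \ref{Wolfgang bis_v} and compute the restricted subsums of the resulting explicit sequences) is exactly the paper's approach, but two of your concrete claims fail, and they are precisely the points where the real work lies.

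First, for part (1) you set out to realize each target $h$ as the sum of a subsequence of length \emph{exactly} $mn-2$. That is the wrong target: part (1) concerns $\Sigma_{\le mn-2}(S)$, and exact length is unattainable for most $h$. Indeed, for $S=b_1^{m-1}b_2^{sm-1}(-xb_1+b_2)^{(n+1-s)m-1}$ a subsequence $b_1^i b_2^j(-xb_1+b_2)^{j'}$ of length $i+j+j'=mn-2$ has $b_2$-coefficient $j+j'\in[mn-m-1,\,mn-2]$, so only $m$ of the $mn$ possible values of $a_2$ are reachable at that exact length (e.g.\ $h=b_2$ lies in $\Sigma_1(S)$ but typically not in $\Sigma_{mn-2}(S)$). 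The paper instead builds, for each $h$, a subsequence whose length is tailored to $h$ (roughly $a_2$, or $a_1+a_2$, or $a_1+a_2-mn$ plus a correction), using the freedom ``$\le mn-2$'' essentially. Exact length $mn-2$ is only needed in part (2), and there it is achieved by first reducing to part (1): after translating by $c$ one writes $S=0^{tm-1}b_1^{(n-t)m}T$ (resp.\ $0^{mn-1}T$) with $T$ extremal for $\eta$, and pads a short subsequence of $T$ up to length $mn-2$ with zeros and, in the independent case, with blocks $b_1^{m}$ of sum $0$. Your sketch does not supply any padding mechanism.

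Second, your choice $K=\langle b_1\rangle$, and the assertion that in the non-independent case ``the extra relations only help'', is false in one of the cases. When $\ord(b_1)=mn$ and $mb_1=mb_2$ (the case $d=n$, $\ell=1$ in the paper's notation, which occurs under alternative (2) of Theorem \ref{Wolfgang_v}), write $f_1=b_1-b_2$ (of order $m$, independent of $b_2$). Every subsequence $b_1^ib_2^j(-b_1+b_2)^{j'}$ has sum $(i-j')f_1+(i+j+j')b_2$, so an element of $-b_2+\langle f_1\rangle$ would force $i+j+j'\equiv -1\pmod{mn}$, impossible for $1\le i+j+j'\le mn-2$. Hence the \emph{entire} coset $-b_2+\langle b_1-b_2\rangle$ is unreachable, and this coset is not contained in $-b_2+\langle b_1\rangle$; the exceptional subgroup must be taken to be $\langle b_1-b_2\rangle$ in that case (the paper's Case 3), while in the remaining dependent cases one can even take $K=\{0\}$. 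Finally, note that the structure theorems require $m\ge 2$, so the case $m=1$ needs the separate (easy) argument via $S=b^{n-1}$.
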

The condition $n \ge 2$ is necessary. Indeed, the claim is not true for groups of the form $C_m^2$. To see this it suffices to note that for $\{b_1, b_2\}$ an independent generating set and  $S= b_1^{m-1}b_2^{m-1} (b_1+b_2)^{m-1}$ the set 
$\Sigma_{\le m - 2} (S)$ contains no element of $-b_1 + \langle b_2 \rangle $ and 
$-b_2 + \langle b_1 \rangle$.

\begin{proof}
We first deal with the main case $m \ge 2$. 

\medskip
(1). Let $S$ be a sequence over $H$ of length $|S|= \eta(H)-1$ not containing any short zero-sum subsequence. By Theorem \ref{Wolfgang_v} we know that 
\[
S = b^{m-1}_1b^{sm-1}_2 \left(-xb_1 + b_2\right)^{(n+1-s)m-1}
\]
where $\{b_1,b_2\}$ is a generating set of $H$ with $\ord(b_2) = mn$, $s \in [1,n]$, $x \in [1, m]$ with $\gcd(x,m)=1$ and either  
$\{b_1,b_2\}$ is an independent generating set of $H$, or $s= n$ and $x=1$.

\medskip
Let $d \in[1,n]$ such that $\ord(b_1)=md$ and, whenever $d > 1$,  let $\ell \in [1,d-1]$ relatively prime to $d$ such that $mb_1=\ell m(n/d)b_2$; 
 as recalled after Theorem  \ref{Wolfgang bis_v} this is always possible. 

\medskip
We now distinguish the following three cases. 

\medskip
\noindent
\textbf{Case 1.} 
$d=1$, that is to say $\{b_1,b_2\}$ is an independent generating set of $H$. 
In particular, $mb_1=0$.
Let $h = a_1 b_1 + a_2 b_2$ with $a_1\in [0,m-1]$ and $a_2 \in [0,mn-1]$.
If $a_1=0$, then $h \notin \Sigma_{\le mn-2}(S)$ only if $a_2=0$ or $a_2=mn-1$, that is to say only if $h=0$ or $h=-b_2$.
If $a_1\neq 0$ and $a_2 \le m - 1$, we observe that $h \in \Sigma_{a_1+a_2}(S)$ and $a_1+a_2 \le 2m -2 \le mn-2$.
Now, assume that $a_1 \neq 0$ and $a_2 \ge m$. 
Let $v \in [1,m-1]$ be the unique integer such that $a_1 \equiv -vx \pmod{m}$.
In particular, one has $v \le m-1 < a_2$ so that $1 \le a_2-v \le mn-2 < mn-1=ms-1+m(n-s)$.
Therefore, there exists $q \in [0,n-s]$ such that $0 \le a_2-v-qm \le ms-1$.
Such an integer $q$ readily satisfies $1 \le v+qm \le m-1+m(n-s)=(n+1-s)m-1$.
As a consequence, 
\[
S'=(-xb_1+b_2)^{v+qm} b^{a_2 - v - qm}_2
\] 
is a subsequence of $S$ of length $|S'|=a_2$ such that
\[
\sigma(S') = -vxb_1 + vb_2 - vb_2 -qx(mb_1) + qmb_2 - qmb_2  + a_2b_2 = a_1b_1 + a_2b_2= h,
\]
so that $h \in \Sigma_{\le a_2}(S)$ and the claim follows with $k'= b_2$ and $K = \langle b_1 \rangle$.

\medskip
\noindent
\textbf{Case 2.} $1 < d < n$, or $d=n$ and $\ell \ge 2$. 
Let $h = a_1 b_1 + a_2 b_2$ with $a_1\in [0,m-1]$ and $a_2 \in [0,mn-1]$.
If $a_1=0$ then $h \notin \Sigma_{\le mn-2}(S)$ only if $a_2=0$ or $a_2=mn-1$, that is to say only if $h=0$ or $h=-b_2$.
Now, assume that $a_1 \neq 0$. 
Then, either $a_1 + a_2 \le mn-2$ in which case $h \in \Sigma_{\le mn-2}(S)$ or $a_2 \ge mn-1 - a_1 \ge mn-1 - (m-1) = m(n-1)$ so that
\[
2mn-1-m \ge mn-1 + mn \frac{d-1}{d} \ge a_2+\ell m \frac{n}{d} \ge m(n-1) + 2m = mn + m.
\]
Therefore, setting $a'_2=a_2+\ell m(n/d)-mn$, we have $a'_2 \in [m,mn-2]$ and since $m-a_1 \in [1,m-1]$ we obtain that
\[
S'=(-b_1+b_2)^{m-a_1} b^{a'_2-(m-a_1)}_2
\] 
is a subsequence of $S$ of length $|S'|=a'_2 \le mn-2$ verifying
\[
\sigma(S')=a_1b_1-mb_1+a'_2b_2=a_1b_1-mb_1 + \ell m \frac{n}{d}b_2 +a_2b_2=h,
\]
so that $h \in \Sigma_{\le mn-2}(S)$ and the claim follows with $k'= b_2$ and $K = \{0\}$.

\medskip
\noindent
\textbf{Case 3.} $d=n$ and $\ell=1$. In this case, $mb_1=mb_2$.
Let $h = a_1 b_1 + a_2 b_2$ with $a_1\in [0,m-1]$ and $a_2 \in [0,mn-1]$.
If $a_1=0$ then $h \notin \Sigma_{\le mn-2}(S)$ only if $a_2=0$ or $a_2=mn-1$, that is to say only if $h=0$ or $h=-b_2$.
Now, assume that $a_1 \neq 0$. 
Then, either $a_1 + a_2 \le mn-2$ in which case $h \in \Sigma_{\le mn-2}(S)$ or $a_1+a_2=mn-1$ or $a_1+a_2 \ge mn$ so that
\[
2mn-2 \ge mn-1+m-1\ge a_1+a_2 \ge mn.
\]
Therefore, setting $a'_2=a_1+a_2-mn$, we have $a'_2 \in [0,mn-2]$ and since $m-a_1 \in [1,m-1]$ we obtain that
\[
S'=(-b_1+b_2)^{m-a_1} b^{a'_2}_2
\] 
is a subsequence of $S$ of length $|S'|=m-a_1+a'_2=m+a_2-mn \le m + mn - 1 - mn \le m-1$ verifying
\[
\sigma(S')=a_1b_1-mb_1+mb_2 -a_1b_2 + a_1b_2 + a_2b_2=h,
\]
so that $h \in \Sigma_{\le mn-2}(S)$ and the claim follows with $k'= b_2$ and $K = \left\langle b_1-b_2 \right\rangle$.

\medskip
(2). Suppose that $m$ has Property D. Let $S$ be a sequence over $H$ of length $|S|= \mathsf{s}(H)-1$ not containing any zero-sum subsequence of length $\exp(H)$. 
By Theorem \ref{Wolfgang bis_v} we know that
\[
S = c^{tm-1} (b_1+c)^{(n+1 -t)m-1}(b_2+c)^{sm-1} (-xb_1 + b_2+c)^{(n+1-s)m-1}
\]
where $c\in H$, $\{b_1,b_2\}$ is a generating set of $H$ with $\ord(b_2) = mn$, $s,t \in [1,n]$, $x \in [1, m]$ with $\gcd(x,m)=1$ and either $\{b_1,b_2\}$ is an independent generating set of $H$, or $s=t= n$ and $x=1$.
 
\medskip
Since $\Sigma_{mn - 2} (-c + S)  = 2c + \Sigma_{mn - 2} (S)$, we can assume without loss of generality that $c= 0$. 
We now distinguish the following two cases. 
 
\medskip
\noindent
\textbf{Case 1.} 
If $\{b_1,b_2\}$ is an independent generating set of $H$, we see that $S = 0^{tm-1} b_1^{(n -t)m}T$, where $T$ is a sequence of length $\eta(H)-1$ that has no short zero-sum subsequence. 
By point (1), it follows that then $\Sigma_{\le mn - 2} (T) \supseteq H \setminus ((-k' + K)  \cup \{0\} )$ for a proper subgroup $K$ and some $k' \notin K$.
Therefore, it suffices to assert that $\{0\} \cup \Sigma_{\le mn - 2} (T) \subseteq \Sigma_{mn - 2} (0^{tm-1} b_1^{(n -t)m}T)$.
Indeed, note that for each subsequence $T' \mid T$ of length at most $mn-2=mt-1+m(n-t)-1$, and any integer $t' \in [0,n-t]$ such that
\[
mn-2 \ge |T'| +mt' \ge m(n-t)-1,
\]
we obtain
\[
0 \le mn-2-|T'|-mt' \le mt-1
\]
so that, since $mb_1=0$ in this case, the sequence $0^{mn-2 - |T'| - mt'} b_1^{mt'}T'$ is a subsequence of $S$ of length $mn-2$ with the same sum.
The fact that we get $0$ in addition to  $\Sigma_{\le mn - 2} (T)$ is due to the fact that $T'$ can be chosen to be the empty sequence.
 
\medskip
\noindent
\textbf{Case 2.} 
If $s=t= n$ and $x=1$, we see that $S = 0^{mn-1}T$, where $T$ is a sequence of length $\eta(H)-1$ that has no short zero-sum subsequence. 
By point (1), it follows that then $\Sigma_{\le mn - 2} (T) \supseteq H \setminus ((-k' + K)  \cup \{0\} )$ for a proper subgroup $K$ and some $k' \notin K$. 
We assert that $\{0\} \cup \Sigma_{\le mn - 2} (T) \subseteq \Sigma_{mn - 2} (0^{mn-1}T)$, then the claim is proved.  
As above, it suffices to note that for each subsequence $T'$ of $T$, the sequence $0^{mn-2 - |T'|}T'$ is a subsequence of $S$ of length $mn-2$ with the same sum. 
The fact that we get $0$ in addition to  $\Sigma_{\le mn - 2} (T)$ is due to the fact that $T'$ can be chosen to be the empty sequence. 

\medskip
To finish the argument  we consider the case $m=1$. 
For assertion (1), we have a sequence $S$ over $H$ of length $|S|= \eta(H)-1=n-1$ not containing any short zero-sum subsequence. By the results recalled before  Theorem \ref{Wolfgang_v}  we know that $S= b^{n-1}$ for some generating element $b$ of $H$. 
It follows that $\Sigma_{\le n - 2} (S) = \{b, 2b, \dots, (n-2)b\} $. Thus the claim is established with $K = \{0\}$.   
For assertion (2), we have a sequence $S$ over $H$ of length $|S|= \mathsf{s}(H)-1=2n-2$ not containing any zero-sum subsequence of length $n$. By the results recalled above we know that $S= c^{n-1} (c+b)^{n-1}$ for some generating element $b$ of $H$. 
Without loss of generality we can assume that $c=0$. It follows that $\Sigma_{ n - 2} (S) = \{0,b, 2b, \dots, (n-2)b\} $. Thus the claim is established with $K = \{0\}$.   
\end{proof}

The following lemma slightly develops a well-known technique useful to establish  Conjecture \ref{GaoConjecture}; see, e.g., \cite[Proposition 2.7]{Gao03} or \cite[Proposition 2.3']{SuryThanga02} for earlier versions. We do not need the second part in this paper, but include it as it might be useful elsewhere.  We note that the condition in the lemma is trivial for $\exp(G)\le 4$.

\begin{lemma}
\label{lem_exp-1}
Let $G$ be a finite abelian group. 
The following two statements hold.
\begin{enumerate}
\item Let $S$ be a sequence over $G$ of length $\eta(G) +  \exp(G)-1$. 
Let $C \mid S$ be a subsequence such that there exists some $h \in G$ with $j h \in \Sigma_{j}(C)$ for each $j \le |C|$. 
If $|C|  \ge \lfloor (\exp(G ) - 1)/2 \rfloor$, then $S$ has a zero-sum subsequence of length $\exp(G)$.
\item Let $S$ be a sequence over $G$ of length $(\eta(G)-1) +  \exp(G)-1$ that does not contain any zero-sum subsequence of length $\exp(G)$. 
Let $C \mid S$ be a subsequence such that there exists some $h \in G$ with $j h \in \Sigma_{j}(C)$ for each $j \le |C|$.
If $|C|  \ge \lfloor (\exp(G ) - 1)/2 \rfloor$, then $-h+S$ has a  subsequence of length $\eta(G)-1$ without any short zero-sum subsequence.
\end{enumerate}
\end{lemma}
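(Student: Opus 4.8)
The plan is to prove both parts by one padding argument. Write $n=\exp(G)$; since $\ord(h)$ divides $\exp(G)$ we have $nh=0$, and subtracting $h$ from every term of a length-$k$ sequence shifts its sum by $-kh$. The core construction produces a zero-sum subsequence of $S$ of length exactly $n$ as follows: take a short zero-sum subsequence $W$ of the translate $-h+S$ whose length $k$ lies in the window $[\,n-|C|,\,n\,]$; translating it back, $h+W$ has sum $kh$; then glue to it a divisor $C'$ of $C$ with $|C'|=n-k$ and $\sigma(C')=(n-k)h$, which exists by the hypothesis on $C$ (as $n-k\le|C|$). The sequence $(h+W)C'$ has length $n$ and sum $kh+(n-k)h=nh=0$. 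For part (1) this is exactly the conclusion; for part (2), which I argue by contradiction, producing such a subsequence contradicts the assumption on $S$, and hence $-h+S$ must contain a length-$(\eta(G)-1)$ subsequence with no short zero-sum subsequence.

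First I would dispose of the case $|C|\ge n$: then $0=nh\in\Sigma_n(C)$ already gives a length-$n$ zero-sum subsequence of $C$, hence of $S$, which settles (1) and makes the hypothesis of (2) vacuous. So assume $|C|\le n-1$, in particular $n-|C|\ge1$. Set $T=-h+S$, let $\bar C=-h+C$ be the translate of $C$ sitting inside $T$, and greedily peel disjoint short zero-sum subsequences $W_1,W_2,\dots$ off $T\bar C^{-1}$. In part (1) this is possible whenever the remaining sequence has length $\ge\eta(G)$, by the definition of $\eta$; in part (2), assuming for contradiction that every length-$(\eta(G)-1)$ subsequence of $-h+S$ contains a short zero-sum subsequence, it is possible whenever the remainder has length $\ge\eta(G)-1$. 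Since $|T\bar C^{-1}|=|S|-|C|$ is $\eta(G)+n-1-|C|$ in case (1) and $\eta(G)+n-2-|C|$ in case (2), the total length peeled off is at least $n-|C|$ in both cases.

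Now I would choose which of the $W_i$ to use. As each $|W_i|\le\exp(G)=n$ and these lengths sum to at least $n-|C|\ge1$, there is a first index $j$ with $\sum_{i\le j}|W_i|\ge n-|C|$. If $|W_j|\ge n-|C|$, then $W:=W_j$ already has length in $[\,n-|C|,\,n\,]$. Otherwise $|W_j|\le n-|C|-1$ and $\sum_{i<j}|W_i|\le n-|C|-1$, so $\sum_{i\le j}|W_i|\le 2(n-|C|-1)\le n$, the last inequality holding exactly because $|C|\ge\lfloor(n-1)/2\rfloor$; here I take $W:=W_1\cdots W_j$. In either case $W\mid T\bar C^{-1}$ is a zero-sum subsequence of length $k\in[\,n-|C|,\,n\,]$, and $h+W$ divides $SC^{-1}$; feeding this $k$ into the core construction of the first paragraph finishes the argument.

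The one nontrivial step is the selection in the third paragraph: the hypothesis $|C|\ge\lfloor(\exp(G)-1)/2\rfloor$ enters precisely there, and nowhere else, to keep the assembled block from overshooting $n$ once its cumulative length reaches $n-|C|$; the slightly fiddly part is checking $2(n-|C|-1)\le n$ for both parities of $n$. The remaining ingredients — the translation identities, the extraction of short zero-sum subsequences via the defining property of $\eta$, and the bookkeeping that makes $(h+W)C'$ a genuine subsequence of $S$ (automatic since $h+W\mid SC^{-1}$ and $C'\mid C$) — are routine.
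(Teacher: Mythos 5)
Your proof is correct and follows essentially the same approach as the paper's: remove $C$, extract short zero-sum subsequences from the rest using the defining property of $\eta$ (respectively the contradiction hypothesis in part (2)), and complete the resulting zero-sum piece to length $\exp(G)$ via the structured subsums $jh\in\Sigma_j(C)$, with the bound $|C|\ge\lfloor(\exp(G)-1)/2\rfloor$ playing the identical role of preventing the zero-sum piece from being too long to pad. The only difference is bookkeeping: the paper takes a single short zero-sum subsequence of maximal length and argues by a dichotomy on whether its length exceeds $\exp(G)/2$, whereas you greedily peel off several and stop at the first moment the cumulative length lands in $[\exp(G)-|C|,\exp(G)]$; both implementations are sound.
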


\begin{proof}
Without loss of generality suppose that $h=0$.

\medskip
\noindent
(1).  Consider $ SC^{-1}$. Let $T \mid SC^{-1}$ be a short zero-sum subsequence (or the empty sequence) of maximal length. 
If $|T| > \exp(G)/2$, then $|C| \ge \exp(G) - |T|$. 
Thus, $C$ has a subsequence $C'$ of length $\exp(G) - |T|$ with sum $0$.
Since $|TC'| = \exp(G)$ and its sum is $0$, the argument is complete in this case. 
Consequently, we can assume that  $|T| \le \exp(G)/2$. 
It follows that  $SC^{-1}T^{-1}$ has no short zero-sum subsequence. 
To see this it suffices to note that a short zero-sum subsequence $T'$ would satisfy $|T'| \le |T| \le \exp(G)/2$ and thus $TT'$ would also be a short zero-sum subsequence of $SC^{-1}$ contradicting the maximality of $T$. 
If $|C| \ge \exp(G) - |T|$, then we get a zero-sum subsequence of length $\exp(G)$ as above. 
Thus, $|C| + |T| \le \exp(G) - 1$ and thus  $|SC^{-1}T^{-1}| \ge \eta(G)$, contradicting the fact that $SC^{-1}T^{-1}$ has no short zero-sum subsequence.

\medskip
\noindent
(2). The proof is similar to the first part. Consider $ SC^{-1}$. 
Let $T \mid SC^{-1}$ be a short zero-sum subsequence (or the empty sequence) of maximal length. 
If  $|T| > \exp(G)/2$ then, as in (1), $C$ has a subsequence $C'$ of length $\exp(G) - |T|$ with sum $0$, yielding again a zero-sum subsequence of $S$ of length $\exp(G)$, which is a contradiction. 
Consequently, we have  $|T| \le \exp(G)/2$. It follows that, as in (1), $SC^{-1}T^{-1}$ has no short zero-sum subsequence. 
If $|C| \ge \exp(G) - |T|$, then we get a contradiction as above. Thus, $|C| + |T| \le \exp(G) - 1$ and thus  $|SC^{-1}T^{-1}| \ge \eta(G)-1$, establishing our claim.
\end{proof}

\section{Proofs of the main results}

In this section we prove our Theorems \ref{eta}, \ref{EGZ} and \ref{D_k}. 
The proofs of the latter two will rely on the first.

\begin{proof}[Proof of Theorem \ref{eta}]
We start by discussing the lower bounds.  If $n=1$, then \cite[Proposition 3.1(3)]{EdelGero07} yields
\[
\eta(C_2 \oplus C_{2m} \oplus C_{2m}) \ge 2+ (2^2-1)(2m - 1 + 2 - 1) = 6m + 2,
\]
and if $n \ge 2$, then \cite[Lemma 3.2]{EdelGero07} gives
\[
\eta(C_2 \oplus C_{2m} \oplus C_{2mn}) \ge 2(\mathsf{D}(C_2 \oplus C_{2m})-1)+2mn = 4m + 2mn.
\]

We can now turn to the upper bounds. 
Let $H$ be a subgroup of $G$ isomorphic to $C_m \oplus C_{mn}$ such that $G \slash H$ is isomorphic to $C_2^3$. 
We apply the inductive method with 
\[
H \hookrightarrow G \overset{\pi}{\to} G \slash H.
\]

First, suppose $n=1$, that is, let $G = C_2 \oplus C_{2m} \oplus C_{2m}$.  
By \cite[Proposition 5.7.11]{GeroKoch06} and since $\eta(C_2^3)= 8$ and $\eta(C_m^2)= 3m-2$, it follows that 
\[
\eta(G) \le  (\eta(H) - 1 ) \exp( G \slash H )  + \eta( G \slash H ) = ( 3 m - 3 ) 2 + 8 = 6m + 2.
\] 

Second, suppose $n\ge 2$. 
Let $S$ be a sequence over $G$ with $|S|=4m + 2mn$. 
We have to show that $S$ has a short zero-sum subsequence. 
Note that applying \cite[Proposition 5.7.11]{GeroKoch06} yields only an upper bound of $2mn +4m +2$ 
and a more refined analysis is needed. 

Since $4m + 2mn = 2(2m + mn - 4) + \eta(C_2^3)$, it follows that 
there exist $(2m +mn - 3)$ non-empty and disjoint subsequences of $S$, say, $S_1 \cdots S_{2m +mn - 3} \mid S$, 
with $|S_i| \le 2$ and $\sigma(\pi(S_i))=0$ for each $i$. 
Let $T$ be the subsequence of $S$ such that  $S_1 \cdots S_{2m +mn - 3} T= S$. 
We note that $|T| \ge 6$.

\medskip
We observe that $R= \sigma(S_1)\cdots \sigma(S_{2m + mn -3})$ is a sequence over $H$ of length $\eta(H)-1$. 
If $R$ has a non-empty zero-sum subsequence of length at most $mn$, that is one that is short relative to $H$,  we can complete the argument as follows. 
We note that if $\sum_{i \in I} \sigma(S_i)=0$ for some $\emptyset \neq I \subseteq [1, 2m +mn -3]$ with at most $mn$ elements, then $\prod_{i\in I} S_i$ is a non-empty zero-sum subsequence of $S$ of length at most  $2|I| \le 2mn$. Thus we assume $R$ does not have a short zero-sum subsequence. This means that $R$ fulfills the conditions of Lemma \ref{lem_sums}. Thus, we get that there exist a proper subgroup $K$ of $H$ and some $k' \in H\setminus K$ such that the complement (in $H$) of $\Sigma_{\le mn -2} (R)\cup \{0\}$ is contained in $-k' + K$.

\medskip
We continue by analyzing the sequence $T$. First, we note that we may assume that $\pi(T)$ does not have a non-empty zero-sum subsequence of length at most $2$. Otherwise, let $S_0\mid T$ with $1 \le |S_0| \le 2$ and we consider the sequence $\sigma(S_0)R$ over $H$ that has length $\eta(H)$. Thus it has a short (relative to $H$) zero-sum subsequence. Using the same argument as above, this yields a short (relative to $G$) zero-sum subsequence of $S$.

\medskip
Second, somewhat in the same vein, we note that for each $S_0\mid T$ with $1 \le |S_0| \le 4$ such that $\pi(S_0)$ is a zero-sum sequence we may assume that $\sigma(S_0) \notin - (\Sigma_{\le mn -2} (R)\cup \{0\})$. 
To see this, just observe that otherwise we would get a non-empty zero-sum subsequence of $\sigma(S_0)R$ of length at most $mn-1$, which contains $\sigma(S_0)$.
This then establishes the existence of a non-empty zero-sum subsequence of $S$ of length at most $|S_0|+ 2(mn-2) \le 2mn$, that is to say a short zero-sum subsequence of $S$.  
Therefore, recalling what we know about  $\Sigma_{\le mn -2} (R)\cup \{0\}$, we get a short zero-sum subsequence of $S$ unless $\sigma(S_0) \in    k' + K$ for each $S_0 \mid T$ with $1 \le |S_0| \le 4$ and $\sigma(\pi(S_0))=0$. 

\medskip
It remains to show that  $\sigma(S_0) \in    k' + K$ for each $S_0 \mid T$ with $1 \le |S_0| \le 4$ and $\sigma(\pi(S_0))=0$ is impossible. We know that $|T| \ge 6$ and that $\pi(T)$ consists of distinct non-zero elements, as otherwise $\pi(T)$ would contain a non-empty zero-sum subsequence of length at most $2$, which we excluded above. 
Fixing an appropriate independent generating set $\{e_1, e_2, e_3 \}$ of $G/H$ we may assume that $\supp(\pi(T))$ contains all non-zero elements except $e_1 + e_2 + e_3$. For $I \subseteq \{1,2,3\}$ with two elements, let $e_I=\sum_{i\in I}e_i$. We note that $\pi(T)$ has  at least the following zero-sum subsequences of length at most $4$: $V_k=e_ie_je_{\{i,j\}}$, 
$V_0=e_{\{1,2\}}e_{\{2,3\}}e_{\{1,3\}}$, and $V'_{i}=e_{\{i,j\}}e_{\{i,k\}}e_je_k$ for $\{i,j,k\}=\{1,2,3\}$. Let $T_i$ (respectively, $T_i^{\prime}$) denote the subsequence of $T$ whose image under $\pi$ is $V_i$ (respectively, $V_i^{\prime}$).
We want to show that at least one of these sequences $T_i^{(\prime)}$  has a sum that is not in $k' +K$. Assume to the contrary that the sum of each of these sequences is in $k'+K$.  Now, note that $V_0V_1V_2V_3=V_1'V_2'V_3'$ and thus $T_0T_1T_2T_3=T_1'T_2'T_3'$. However, that yields $\sigma(T_0T_1T_2T_3) \in 4k' + K$ while $\sigma(T_1'T_2'T_3') \in 3k' +K$. Since $k' \notin K$, this is a contradiction. Thus, $\sigma(S_0) \in    k' + K$ for each $S_0 \mid T$ with $1 \le |S_0| \le 4$ and $\sigma(\pi(S_0))=0$ is indeed impossible, and consequently $S$ has a short zero-sum subsequence. 
\end{proof}

\medskip
We continue with the proof of our result on the Erd{\H o}s--Ginzburg--Ziv constant. 

\begin{proof}[Proof of Theorem \ref{EGZ}]
Since $\mathsf{s}(G) \ge \eta(G) + \exp(G) -  1$ for every finite abelian group $G$ (see the remark before Conjecture \ref{GaoConjecture}), Theorem \ref{eta} readily yields the desired lower bounds. We now show that these bounds are indeed optimal. Our strategy is to obtain a situation in which we can invoke Lemma \ref{lem_exp-1} and then apply Theorem \ref{eta}. As already noted, we can always apply Lemma \ref{lem_exp-1} if $\exp(G) \le 4$. Thus, we assume that $mn >2$.  

\medskip
Let $H$ be a subgroup of $G \simeq C_2 \oplus C_{2m} \oplus C_{2mn}$ isomorphic to $C_m \oplus C_{mn}$ such that $G \slash H$ is isomorphic to $C_2^3$. 
As before, we apply the inductive method with 
\[
H \hookrightarrow G \overset{\pi}{\to} G \slash H.
\]

For $n=1$, by \cite[Proposition 5.7.11]{GeroKoch06} and since $\mathsf{s}(C_2^3)=9$ and $\mathsf{s}(C_m^2) = 4m-3$, it follows that 
\[
\mathsf{s}(G) \le  (\mathsf{s}(H) - 1 ) \exp(G \slash H)  + \mathsf{s}(G \slash H) = (4m - 4 ) 2 + 9 = 8m + 1.
\]

Now, suppose $n\ge 2$ and $m$ has Property D. 
Let $S$  be a sequence over $G$ with $|S|=4m + 4mn - 1$. Assume for a contradiction that  $S$ has no zero-sum subsequence of length $\exp(G)=2mn$.

 \medskip 
Since $4m + 4mn - 1 = 2(2m + 2mn - 5) + \mathsf{s}(C_2^3)$, it follows that there exist $(2m +2mn - 4)$ disjoint subsequences of $S$,  say, $S_1 \cdots S_{2m +2mn - 4} \mid S$ with $|S_i| = 2$ and $\sigma(\pi(S_i))=0$ for each $i$. Let $T$ be the subsequence of $S$ such that  $S_1 \cdots S_{2m +2mn - 4} T= S$. We note that $|T| = 7$.

\medskip
We observe that $R= \sigma(S_1)\cdots \sigma(S_{2m + 2mn -4})$ is a sequence over $H$ of length $\mathsf{s}(H)-1$. If $R$ has a  zero-sum subsequence of length $mn$ we are done, since  $\sum_{i \in I} \sigma(S_i)=0$ for some $I \subseteq [1, 2m + mn - 4]$ with $|I| =  mn$ implies that $\prod_{i\in I} S_i$ is a zero-sum subsequence of $S$ of length  $2|I| = 2mn$. Thus, the assumption that $S$ has no zero-sum subsequence of length $2mn$, implies that  $R$ does not have  a zero-sum subsequence of length $mn$. Hence $R$ fulfills the conditions of Lemma \ref{lem_sums}.

In addition, we note that if $\pi(T)$ still has a zero-sum subsequence of length $2$, then we also get a zero-sum subsequence of $S$ of length $2mn$. Thus, we get that $\pi(T)$  has no zero-sum subsequence of length $2$.

\medskip
We continue by establishing an auxiliary fact. 
\begin{itemize}
\item[\textbf{F}.] If $g \mid T$ and $h \mid S$ such that $\pi(g) = \pi(h)$, then $h=g$ or $S$ contains a zero-sum subsequence of length $\exp(G)$.
\end{itemize}

Assume there are distinct $g,h$ with $g \mid T$ and $h \mid S$ such that $\pi(g) = \pi(h)$. Since $\pi(T)$ does not contain a zero-sum subsequence of length $2$, it follows that $h \nmid T$ and thus $h \mid S_i$ for some $i$, say $i=1$. Then $R'=\sigma(gS_1h^{-1}) \sigma(S_2) \cdots \sigma(S_{2m + 2mn -4})$ is a sequence over $H$ of length $\mathsf{s}(H)-1$. If this sequence contains a zero-sum subsequence of length $mn$, then as above $S$ contains a zero-sum subsequence of length $2mn$. 
Thus, we know that it does not contain such a subsequence, and since we have $|\gcd(R,R')| \ge \mathsf{s}(H)-2$, Lemma \ref{stability_s} gives that $R=R'$. This means that $\sigma(gS_1h^{-1})= \sigma(S_1)$, contradicting the assumption that $g,h$ are distinct. This establishes \textbf{F}.

\medskip 
After these preparations, we proceed to show that the conditions of Lemma \ref{lem_exp-1} are satisfied. For $r \in \supp(R)$,  let  $I_r \subseteq [1, 2m + mn - 4] $ denote the set of all $i$ such that $\sigma(S_i)=r$ and let $Q_r = \prod_{i \in I_r} S_i$.  
If for some $i \in I_r$ we have that $S_i$ does not contain two distinct elements, say $S_i = h_i^2$ for some $h_i \in G$,  then it is not hard to see that $jh_i \in \Sigma_{j}(Q_r)$ for every $j \in [1,|Q_r|]$; just note that $\sigma(S_j)= r= 2h_i$ for every $j \in I_r$.  

Let $v_r =  \mathsf{v}_r(R)$. We have $|Q_r|= 2v_r$. If $v_r \ge (n+1)m/2 -1$, then we have $|Q_r| \ge mn + m-2 \ge mn -1  =  \lfloor (\exp(G)-1)/2\rfloor$. 
Thus, if for such an $r$ there is some $i \in I_r$ with $S_i = h^2_i$, then  by Lemma \ref{lem_exp-1} we would get a zero-sum subsequence of $S$ of length $\exp(G)=2mn$.

\medskip
Since $\pi(S_i)$ is a zero-sum sequence of length $2$ over $G \slash H \simeq C_2^3$
we have $\pi(S_i) = e^2$ for some $e \in G\slash H$. If we have $\pi(S_i) = e^2$ for some $e \mid \pi(T)$, then by \textbf{F} we know that $S_i= h_i^2$; this is because both elements of $S_i$ are equal to the one corresponding element in $T$. 

\medskip
Thus, the only situation in which we cannot establish by \textbf{F} that there is some $i \in I_r$ with $S_i = h_i^2$ is that $\pi(Q_r) = e_0^{|Q_r|}$ where $e_0$ is the \emph{unique} element of $G/H$ not in $\pi(T)$. 
 
\medskip
Let $r,r' \in \supp(R)$ be the two elements with the greatest multiplicity in $R$. We know that their respective multiplicities $v_r,v_{r'}$ are at least $(n+1)m/2 -1$; and if $m=1$ then $v_r = v_{r'}=n-1$ (see the results recalled before Theorem \ref{Wolfgang_v}).  Suppose $\pi(Q_{r})  = e_0^{2v_{r}}$ where $e_0$ is the unique element of $G/H$ not in $\pi(T)$, and likewise for $r'$. It suffices to show that there exists some $i \in I_{r} \cup I_{r'}$ such that $S_i =h_i^2$ for some $h_i \in G$. 

\medskip
We proceed to show this is always the case. Since $mn>2$, we get that $v_{r}, v_{r'} \ge 2$.
Let $i \in  I_{r}$ and $i' \in  I_{r'}$. Say $S_{i} = s_1s_2$ and $S_{i'} = s_1's_2'$. 
Assume for a contradiction that each of these two sequences consists of two distinct elements. 
We have $\sigma(S_{i}) = s_1  + s_2 = r$ as well as $\sigma(S_{i'}) = s_1'  + s_2' = r'$. 

\medskip
We can consider instead of  $S_{i} = s_1s_2$ and $S_{i'} = s_1's_2'$, the sequences $S_{i}' = s_1s_1'$, $S_{i'}' = s_2s_2'$. 
We set $S_j' = S_j$ for all $j \notin \{i,i'\}$. 
Then $R'= \sigma(S_1')\cdots \sigma(S_{2m + 2mn -4}')$ is a sequence over $H$ of length $\mathsf{s}(H)-1$ and it has no zero-sum subsequence of length $\exp(H)$. 
Since $\supp(R) \subseteq \supp(R')$, the supports are in fact equal. If $m=1$, it is immediate by the results recalled before Theorem \ref{Wolfgang_v} that $R=R'$.  If $m > 1$, then since the multiplicity of each element in $R$ is $m-1 \pmod{m}$ and the same must be true for $R'$, it follows again that in fact $R=R'$. This means that $\{\sigma(S_{i}'),\sigma(S_{i'}')\} = \{r, r'\}$.  

\medskip
Likewise, we can consider instead of  $S_{i} = s_1s_2$ and $S_{i'} = s_1's_2'$, the sequences $S_{i}'' = s_1s_2'$, $S_{i'}'' = s_2s_1'$. 
And we set $S_j'' = S_j$ for all $j \notin \{i,i'\}$. Again, it follows that  $R=R''$.  
This means that $\{\sigma(S_{i}''),\sigma(S_{i'}'')\} = \{r, r'\}$.  

\medskip
Since $\sigma (S_{i}') \neq \sigma (S_{i}'')$, recall that we assumed $s_{1}'\neq s_{2}'$, we get $\{\sigma (S_{i}'), \sigma (S_{i}'')\} =\{r, r'\}$. Without loss of generality we can assume $\sigma (S_{i}')=r$. Then, we get $\sigma (S_{i}'')=r'$ and thus $\sigma (S_{i'}'')= r$. Since also $\sigma (S_{i}) = r$, we obtain $s_{1} + s_{2} = s_{1} + s_{1}' = s_{2} + s_{1}'$ so that $s_{2} = s_{1}'$ and $s_{1} = s_{1}'$ and finally $s_{1} = s_{2}$, which is a contradiction.
\end{proof}

We end with the proof of our result on the multiwise Davenport constants.

\begin{proof}[Proof of Theorem \ref{D_k}] 
When $n \ge 2$, it follows from Theorems \ref{ranktwo}, \ref{D} and \ref{eta} that
\[
\mathsf{D}(G) = \mathsf{D}(C_2 \oplus C_{2m}) + (2mn-1) \ \text{ and } \ \eta(G) \le \mathsf{D}(G) + 2mn,
\]
so that \cite[Theorem 6.1.5(1)]{GeroKoch06} (see also \cite{DelOrdQui01} and \cite{Koch92}) yields the desired result.

\medskip
Now, assume that $n=1$. If $k=1$, Theorem \ref{D} readily gives
\[
\mathsf{D}_1(G) = \mathsf{D}(G) = 2m + 2mn.
\]
If $k \ge 2$, let $H$ be a subgroup of $G$ isomorphic to $C_m^2$ such that $G \slash H$ is isomorphic to $C_2^3$. 
On the one hand, \cite[Lemma 3.7]{DelOrdQui01} gives $\mathsf{D}_0(C^3_2)=3$, $k_{\mathsf{D}}(C^3_2)=2$. 
By Theorem \ref{ranktwo},
\[
\mathsf{D}_k(C_m^2) = m + km -1 \ge 2 = k_{\mathsf{D}}(C^3_2).
\] 
Therefore, we have (the first inequality following from \cite[Proposition 2.6]{DelOrdQui01}, which can be proved using the inductive method)

\[\begin{aligned}
\mathsf{D}_k(C_2\oplus C_{2m} \oplus C_{2m}) & \le \mathsf{D}_{\mathsf{D}_k(H)}(G/H) \\
                                                                             & =  \mathsf{D}_{\mathsf{D}_k(C_m^ 2)}(C^3_2) \\
                                                                             &  =   \mathsf{D}_{(m(k+1)-1)}(C^3_2) \\
                                                                             &  =   \mathsf{D}_0(C^3_2) + 2(m +km-1) \\
                                                                             &  =   3 + 2(m+km-1) \\
                                                                             &  =   (2m+1) + k(2m).
\end{aligned}\]
On the other hand, let $\{e_1,e_2,e_3\}$ be an independent generating set of $G$ such that $\text{ord}(e_1)=2$ and $\text{ord}(e_2)=\text{ord}(e_3)=2m$.
For $k \ge 2$, we consider the sequence
\[
S_k = (e_2+e_3)^{2(k-1)m-1} e_2^{2m-1} e_3^{2(m-1)} e_1 (e_1+e_2+e_3) (e_1+e_2) (e_1+e_3).
\]
We show that $S_k$ does not contain $k$ disjoint non-empty zero-sum subsequences. 
This yields  $ \mathsf{D}_k(C_2\oplus C_{2m} \oplus C_{2m})  > |S_k| = 2m + k(2m)$.  
We  set 
\[
T =  e_2^{2m-1} e_3^{2(m-1)} e_1 (e_1+e_2+e_3) (e_1+e_2) (e_1+e_3) \mid S_k,
\]
and
\[
U = e_1 (e_1+e_2+e_3) (e_1+e_2) (e_1+e_3) \mid T.
\]

\medskip
Now, suppose we have $A_1 \cdots A_k \mid S_k$ where $A_1,\dots,A_k$ are $k$ non-empty zero-sum subsequences.  Since every non-empty zero-sum sequence is a product of minimal zero-sum subsequences, we can assume that $A_i$ is minimal for every $i \in [1,k]$. 

\medskip
For every $i \in [1,k]$, we set $k_i=\mathsf{v}_{e_2+e_3}(A_i) \in [0,2m]$ and $A'_i=A_i((e_2+e_3)^{k_i})^{-1} \mid T$. 
If $A'_i$ does not contain any element of $U$ 
then $A'_i=e^{2m-k_i}_2e^{2m-k_i}_3$ so that $|A_i|=4m-k_i$.
If $A'_i$ contains at least one of the elements of $U$
then it contains exactly two or four of them. 
Therefore, we obtain the following seven cases:
\begin{itemize}
\item \(A'_i=e_1 (e_1+e_2) e^{2m-(k_i+1)}_2e^{2m-k_i}_3\),  so that \( |A_i|=4m-k_i+1\).
\item  \(A'_i=e_1 (e_1+e_3) e^{2m-k_i}_2e^{2m-(k_i+1)}_3\),  so that  \(|A_i|=4m-k_i+1\).
\item \(A'_i=e_1 (e_1+e_2+e_3) e^{2m-(k_i+1)}_2e^{2m-(k_i+1)}_3\) ,  so that \( |A_i|=4m-k_i\).
\item \(A'_i=(e_1+e_2) (e_1+e_3) e^{2m-(k_i+1)}_2e^{2m-(k_i+1)}_3\),  so that  \(|A_i|=4m-k_i\).
\item \(A'_i=e_1 (e_1+e_2+e_3) (e_1+e_2) (e_1+e_3) e^{2m-(k_i+2)}_2e^{2m-(k_i+2)}_3\),  so that \( |A_i|=4m-k_i\).
\item \(A'_i=(e_1+e_2) (e_1+e_2+e_3) e^{2m-(k_i+2)}_2e^{2m-(k_i+1)}_3\),  so that \( |A_i|=4m-k_i-1\).
\item \(A'_i=(e_1+e_3) (e_1+e_2+e_3) e^{2m-(k_i+2)}_2e^{2m-(k_i+1)}_3\),  so that \( |A_i|=4m-k_i-1\).
\end{itemize}

Note that since $\mathsf{v}_{e_1+e_2+e_3}(S_k)=1$ and $A_1,\dots,A_k$ are disjoint, there is at most one $i \in [1,k]$ such that $|A_i|=4m-k_i-1$.
This yields
\[\begin{aligned}
\displaystyle\sum^k_{i=1} |A_i| & \ge  \displaystyle\sum^k_{i=1}(4m-k_i) -1 \\
                                                  &  =   2k(2m) - \displaystyle\sum^k_{i=1} k_i - 1 \\
                                                  & \ge  2k(2m) - \mathsf{v}_{e_2+e_3}(S_k) - 1 \\
                                                  & =  2k(2m) - (2(k-1)m-1) - 1 \\
                                                  &  =   2m+k(2m) \\
                                                  &  =   |S_k|. 
\end{aligned}\]
Therefore, we obtain $A_1 \cdots A_k = S_k$. Yet, then  $0=\sigma(A_1 \cdots A_k)= \sigma(S_k)= -e_3$, a contradiction.
\end{proof}

\section*{Acknowledgment}

The authors are grateful to the anonymous referee for detailed remarks which allowed for better accuracy.

\end{document}